\numberwithin{equation}{section}
\newcommand\norm[1]{\left\lVert#1\right\rVert}
\newtheorem{defi}{Definition}
\newtheorem{lem}[defi]{Lemma}
\newtheorem{rem}[defi]{Remark}
\newtheorem{theorem}[defi]{Theorem}
\newtheorem*{theorem*}{Theorem}
\begin{document}
\title{Particle picture representation of the non-symmetric Rosenblatt process and Hermite processes of any order}
\author{\L ukasz Treszczotko\footnote{Institute of Mathematics, University of Warsaw, Banacha 2 02-097 Warsaw}\\
\href{mailto:lukasz.treszczotko@gmail.com}{lukasz.treszczotko@gmail.com} }

\maketitle

\begin{abstract}
We provide a particle picture representation for the non-symmetric Rosenblatt process and for Hermite processes of any order, extending the result of Bojdecki, Gorostiza and Talarczyk in~\cite{FILT}. We show that these processes can be obtained as limits in the sense of finite-dimensional distributions of certain functionals of a system of particles evolving according to symmetric stable L\'{e}vy motions. In the case of $k$-Hermite processes the corresponding functional involves $k$-intersection local time of symmetric stable L\'{e}vy processes
\end{abstract}
{\bf Keywords:} Rosenblatt process, Hermite processes, intersection local time, particle systems, stable L\'{e}vy processes, Wick product \\
\textup{2000} \textit{Mathematics Subject Classification}: \textup{Primary:60G18} \textup{Secondary:60F17}  

\section{Introduction}

\subsection{Hermite Processes and Generalized Hermite Processes}

	In this paper we study so called Hermite processes and their generalizations. A stochastic process $(X(t))_{t \geq 0}$ is said to be self-similar if there exists a constant $H>0$ such that for any $a>0$ $(X(at))_{t \geq 0}\overset{d}{=}(a^H X(t))_{t \geq 0}$, where the equality is in the sense of finite dimensional distributions. Any self-similar process which also has stationary increments is usually called $H$-sssi. $H$-sssi processes are studied mainly because they are the only possible limits of normalized partial sums of stationary sequences. To be more precise, Lamperti's theorem states that whenever $(X(n))_{n \in \mathbb{Z}}$ is a stationary sequence of random variables and
\begin{equation}\label{lamperti}
\frac{1}{A(N)}\sum_{n=1}^{[Nt]}X(n) \Rightarrow Y(t)
\end{equation}
in the sense of finite dimensional distributions, where $A(N) \rightarrow \infty$ as $N\rightarrow \infty$, then $(Y(t))_{t \geq 0}$ must be an $H$-sssi process. $H$ is usually called the \emph{Hurst coefficient}. It is exactly in this setting that Hermite processes arose in the first place, in the so called \emph{non-central limit theorems} (see~\cite{DM}). Following~\cite{TM3} we briefly sketch it. Let $(\xi_n)_{n\in \mathbb{Z}}$ be a centered stationary Gaussian sequence with variance equal to $1$ such that
\begin{equation}
r(n):=\mathbb{E}(\xi_n \xi_0)=n^{\frac{2H-2}{k}}L(n),
\end{equation}
with $H \in (\frac{1}{2},1)$, $k \geq 1$ and $L$ - a function slowly varying at infinity. Take any function $g:\mathbb{R}\rightarrow \mathbb{R}$ satisfying $\mathbb{E}g(\xi_0)=0$ and $\mathbb{E}g(\xi_0)^2 <\infty$, which has the following expansion in Hermite polynomials:
\begin{equation}
g(x)=\sum_{j=0}^\infty c_j H_j(x),
\end{equation}
where $H_j$ is the $j$-th Hermite polynomial, $c_j=\frac{1}{j!}\mathbb{E}(g(\xi_0)H_j(\xi_0))$ and $k$ is the smallest $j$ with $c_j \neq 0$. If we introduce the following sequence of stochastic processes:
\begin{equation}\label{partial}
Z^{k,n}_H(t):=\frac{1}{n^H}\sum_{l=1}^{[nt]}g(\xi_l),\quad, n\in \mathbb{N},t \geq 0,
\end{equation}
then 
\begin{equation}\label{taqqu_rep}
Z^{k,n}_H \overset{d}{\Rightarrow}c_k Z^k_H,
\end{equation}
where $Z^k_H$ is the k-Hermite process and the convergence holds in the sense of finite-dimensional distributions (from now on we will use the notation $\overset{d}{\Rightarrow}$ to denote this type of convergence). This is how Hermite processes were obtained in the first place and investigating the convergence of partial sums is still a useful way of obtaining new stochastic processes.
\\ 

Hermite processes may also be described with the help of multiple Wiener-It\^{o} integrals. For an introduction to these integrals see~\cite{N}. For $k\in\mathbb{N}$ (using the notation from~\cite{BT}) one can represent a $k$-Hermite process as
\begin{equation}\label{norm_rep}
Z_H^k(t):=a_{k,d}\int_{\mathbb{R}^k}^{'}\int_0^t \prod_{j=1}^k(s-x_j)_+^{d-1}ds W(dx_1)\ldots W(dx_k),
\end{equation}
where $W$ is a two-sided Brownian motion, $\frac{1}{2}(1-\frac{1}{k}) < d < \frac{1}{2}$ is the number satisfying $H=kd-k/2+1$ so that $1/2 <H<1$, $a_{k,d}$ is a positive constant chosen so that $\mathbf{Var}(Z_H^k(1))=1$ and "$'$" above the integral sign indicates that the diagonal is excluded from integration. For our purposes it will be convenient to use a so called \emph{spectral representation} which uses multiple Wiener-It\^{o} integrals as defined in~\cite{MAJOR}:
\begin{equation}\label{spec_rep}
Z_H^k(t)=c_{k,d}\int_{\mathbb{R}^k}^{''}\frac{e^{i(u_1+\ldots +u_k)t}-1}{i(u_1+\ldots +u_k)}|u_1|^{-d}\ldots |u_k|^{-d} \widehat{W}(du_1)\ldots \widehat{W}(du_k);
\end{equation}
here the constant $c_{k,d}$ serves the same purpose as $a_{k,d}$ in~\eqref{norm_rep} and $\widehat{W}$ is the random complex  Gaussian white noise measure on $\mathbb{R}$, where $d$ is given as before. For more about this representation and random spectral measures (which are used extensively in this paper) see Chapter 3 of~\cite{MAJOR}, for a brief overview see Subsection~\ref{fields}.\par
$k$-Hermite processes ``live'' in the $k$-th Wiener chaos. For $k=1$ the $1$-Hermite process is just a fractional Brownian motion and the $2$-Hermite process is called the \emph{Rosenblatt process}. For a comprehensive introduction to the Rosenblatt process see~\cite{RP}. For all $k \in \mathbb{N}$ $k$-Hermite processes have the same covariance given by 
\begin{equation*}
\mathbb{E}(Z_H^k(s)Z_H^k(t))=:R(s,t)=\frac{1}{2}(s^{2H}+t^{2H}-|s-t|^{2H}).
\end{equation*}
\\

It was not at first obvious whether or not Hermite processes were the only self-similar processes with stationary increments in their respective Wiener 
chaoses. The only sssi-process in the first Wiener chaos is the fractional Brownian motion (see Theorem 1.3.3 in~\cite{EM}). This is not true for Wiener chaoses of order $k \geq 2$ and the first example of this fact was the \emph{non-symmetric Rosenblatt process} (see~\cite{TM4} and~\cite{TUDOR} for a nice introduction to this process), which is obtained if we replace the kernel $\prod_{j=1}^k(s-x_j)_+^{d-1}$ in equation~\eqref{norm_rep} (for $k=2$) by 
\begin{equation}
g(x,y)=x^{-1+\alpha/2}y^{-1+\beta/2}\mathbf{1}_{\{x>0,y>0\}},
\end{equation}
with $\alpha, \beta \in (0,1)$ and $\alpha+\beta>1$. Even in this relatively simple case with $k=2$, $\alpha,\alpha',\beta,\beta' \in (0,1)$ and $\alpha+\beta=\alpha'+\beta'>1$ the corresponding non-symmetric processes have different laws for different choices of $\alpha,\alpha',\beta$ and $\beta'$ (see Proposition 3.10 in~\cite{TUDOR}). More generally, the initial kernel can be replaced by even more general functions to obtain the so called \emph{generalized Hermite processes} introduced and investigated in~\cite{BT}.


\subsection{Representation of Hermite Processes}\label{repr}

Hermite processes arise naturally as limits of normalized partial sums of stationary sequences as in~\eqref{partial}. Recently in~\cite{FILT} a different type of limit theorem was proved. It was shown that the Rosenblatt process can be obtained from a Poisson system of particles evolving according to $\alpha$-stable processes. Our aim is to extend this representation to the general $k$-Hermite processes and the non-symmetric Rosenblatt process. Let us briefly sketch the particle system we are going to use.

Let $(x^j)$ be a Poisson system with Lebesgue intensity measure on $\mathbb{R}$ and let $(\xi^j)_{j=1}^\infty$ be independent symmetric $\alpha$-stable Levy processes with the index of stability $\alpha \in (0,1)$. Notice that we only consider the values of the parameter $\alpha$ for which $(\xi_t)_{t \geq 0}$ is transient. We also assume that these processes are independent of the points $(x^j)$. In the end, assume that $(\sigma_j)_{j=1}^\infty$ are i.i.d random variables such that $\mathbb{P}(\sigma_1=1)=\mathbb{P}(\sigma_1=-1)=\frac{1}{2}$ and that these variables are independent of everything else. The particle system is given by $(x^j+\xi^j_t)_{t \geq 0}$. Thus the initial position of the particles is given by the points $(x^j)$ and they evolve independently according to the symmetric $\alpha$-stable processes. Furthermore, we independently assign charges $\sigma_j$ to these particles.  This is the underlying system which will give rise to the stochastic processes studied in this paper and will be present throughout our work.

In~\cite{FILT} it was shown that the process given by
\begin{equation}\label{talrep}
\xi_t^T =\frac{1}{T}\sum_{j \neq k}\sigma_j \sigma_k  \langle \Lambda(x^j+\xi^j,x^k+\xi^k;T),\mathbf{1}_{[0,t]}\rangle,\quad t\geq 0,
\end{equation}
where $\Lambda$ is the \emph{intersection local time} of two independent $\alpha$-stable L\'{e}vy processes (see section 2.3 in~\cite{FILT},~\cite{AP} and section~\ref{sec_ilt} in in this paper), converges, as $T\rightarrow \infty$, (up to a constant) for $\alpha \in (1/2,1)$, in $C([0,\tau])$ for $\tau \in (0,\infty)$, to the Rosenblatt process with the Hurst coeficient $H=\alpha$. We will show how a non-symmetric Rosenblatt process is obtained from the same particle system. We will also extend the result of~\cite{FILT} to $k$-Hermite processes for $k \geq 3$.

\subsection{Results}
First we will state a limit theorem leading to a non-symmetric Rosenblatt process. Consider the particle system described in Section~\ref{repr} and let $\beta>\alpha$. Define
\begin{equation}\label{ARPfun}
\eta^T_t =\frac{1}{T}\sum_{j \neq k}\sigma_j \sigma_k \int_0^T \int_0^T \mathbf{1}_{[0,t]}(x^j+\xi^j_r)\frac{1}{|x^k+\xi^k_s-x^j-\xi^j_r|^{1-\frac{\beta-\alpha}{2}}}drds
\end{equation}
for $T>0,t\geq 0$. The fact that the above functional is well defined (in the sense that the sum in~\eqref{ARPfun} converges in $L^2(\Omega)$) will be shown in Subsection~\ref{arpprem}. The first of the two main results in this paper is the following theorem.
\begin{theorem}\label{mainARP}
Let $\alpha$ and $\beta$ be such that $1>\beta>\alpha>0$, $\alpha +\beta>1$. Then, as $T\rightarrow \infty$, the processes $(\eta^T_t)_{t \geq 0}$ converge in the sense of finite dimensional distributions to the non-symmetric Rosenblatt process with parameters $(\alpha,\beta)$, up to a multiplicative constant.
\end{theorem}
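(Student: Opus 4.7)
\emph{Plan of proof.} The strategy is to identify the limit via its spectral representation, which for the non-symmetric Rosenblatt process with parameters $(\alpha,\beta)$ takes the form
\begin{equation*}
Z_{\alpha,\beta}(t) = c \int_{\mathbb{R}^2}^{''} \frac{e^{i(u_1+u_2)t}-1}{i(u_1+u_2)} \phi_\alpha(u_1)\phi_\beta(u_2) |u_1|^{-\alpha/2} |u_2|^{-\beta/2} \widehat{W}(du_1) \widehat{W}(du_2),
\end{equation*}
where $\phi_\alpha,\phi_\beta$ are unimodular phase factors coming from the Fourier transform of $x_+^{-1+\alpha/2}$, $x_+^{-1+\beta/2}$. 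The first step is to decouple the two particles in~\eqref{ARPfun} via the Fourier-inversion identity
\begin{equation*}
\frac{1}{|y|^{1-(\beta-\alpha)/2}} = C \int_\mathbb{R} |u|^{-(\beta-\alpha)/2} e^{iuy}\, du,
\end{equation*}
valid (as a tempered distribution) since $0 < \beta-\alpha < 1$. Inserting this into~\eqref{ARPfun} rewrites $\eta^T_t$ as a single Fourier integral over $u$ of a product of two Poisson--Rademacher sums: one in which each particle $j$ contributes $\sigma_j \int_0^T \mathbf{1}_{[0,t]}(x^j+\xi^j_r) e^{-iu(x^j+\xi^j_r)}\,dr$, and one in which each $k$ contributes $\sigma_k \int_0^T e^{iu(x^k+\xi^k_s)}\,ds$, with the diagonal $j=k$ subtracted off as a correction.

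Next, I would compute the covariance $\mathbb{E}[\eta^T_s \eta^T_t]$ by first integrating out the charges $\sigma$, which retains only the two pairings $\{j,k\}=\{l,m\}$ of the double sum, and then applying the Campbell formula for the Poisson intensity together with the stable characteristic function $\mathbb{E}[e^{iu\xi_r}] = e^{-r|u|^\alpha}$. After the Fourier decoupling this produces a double integral in $(u_1,u_2)\in\mathbb{R}^2$ whose integrand contains the two kernel factors $|u_j|^{-(\beta-\alpha)/2}$, semigroup factors $e^{-r|u_1\pm u_2|^\alpha}$ coming from integrating out the spatial Poisson coordinate and intermediate time parameters, and a factor carrying the indicators on $[0,s]$ and $[0,t]$. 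Substituting $u_j \mapsto u_j/T^{1/\alpha}$, $r=Tr'$, $s=Ts'$ and exploiting the self-similarity $\xi_{Tr} \stackrel{d}{=} T^{1/\alpha}\xi_r$ converts the $T\to\infty$ limit into an integral in which the time variables $r',s'$ range over $[0,\infty)$. Evaluating these inner integrals against $e^{-r'|u_j|^\alpha}$ produces exactly the asymmetric spectral weights $|u_1|^{-\alpha/2}$ and $|u_2|^{-\beta/2}$, splitting the single kernel exponent $(\beta-\alpha)/2$ between the two frequency variables through the stable semigroup. The resulting limit matches the $L^2$-isometry applied to the spectral kernel of $Z_{\alpha,\beta}$ above, and a joint computation with several times $t_1,\dots,t_n$ handles covariance convergence.

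Upgrading from covariance asymptotics to finite-dimensional distributional convergence is then obtained by observing that $\eta^T_t$ lives in the second chaos of the Poisson--Rademacher system $(x^j,\xi^j,\sigma_j)$; a fourth-moment-type criterion for second-chaos Poisson functionals, or equivalently the method of moments specialised to quadratic Gaussian limits, reduces the problem to the covariance convergence just performed together with the vanishing of the fourth cumulants under the $1/T$ normalization. The main technical obstacles I foresee are: (i) the rigorous handling of the distributional Fourier identity above, since neither side lies in $L^1$ and one must pass through a mollification or a Plancherel-type duality; (ii) showing that the diagonal $j=k$ correction, as well as the contributions from higher-order chaos components of the Poisson functional, are negligible; and (iii) the delicate spectral matching step in which the single Fourier weight $|u|^{-(\beta-\alpha)/2}$ is split by the stable semigroup into the asymmetric product $|u_1|^{-\alpha/2}|u_2|^{-\beta/2}$ --- this is precisely what distinguishes the non-symmetric limit here from the symmetric Rosenblatt limit of~\cite{FILT}, and it is the heart of the argument.
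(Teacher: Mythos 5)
Your first two steps --- the Fourier decoupling of the Riesz kernel $|y|^{\gamma-1}=C\int|u|^{-\gamma}e^{iuy}\,du$ with $\gamma=(\beta-\alpha)/2$, and the covariance computation in which the stable potential contributes $|u_j|^{-\alpha}$ per particle while the kernel loads the extra $|u_2|^{-(\beta-\alpha)}$ onto one frequency variable only --- are sound and capture the same mechanism the paper uses: there the kernel weight appears as $\widehat{h_\delta}(y)$ multiplying the spectral measure $Z_G(dy)$ of control measure $|y|^{-\alpha}dy$, and the change-of-variables formula for multiple Wiener--It\^{o} integrals converts $\widehat{\psi}(x+y)|y|^{-\gamma}Z_G(dx)Z_G(dy)$ into $\widehat{\psi}(x+y)|x|^{-\alpha/2}|y|^{-\beta/2}\widehat{W}(dx)\widehat{W}(dy)$. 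So your ``heart of the argument'' (iii) is correctly located and correctly resolved.

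The genuine gap is in your final step, the upgrade from covariance convergence to convergence of finite-dimensional distributions. You propose ``a fourth-moment-type criterion \ldots together with the vanishing of the fourth cumulants.'' This cannot work: the target law is a genuinely non-Gaussian element of the second Wiener chaos, whose fourth cumulant is strictly positive and whose cumulants of all orders are nonzero; if the fourth cumulant of $\eta^T_t$ vanished in the limit you would be proving a \emph{Gaussian} limit, which is the opposite of what is wanted. Nor can the covariance identify the limit --- as the paper notes, all $k$-Hermite processes share the same covariance $R(s,t)$, so second moments carry no information about which chaos the limit lives in. A method-of-moments identification would require matching \emph{all} cumulants to those of the non-symmetric Rosenblatt law (this is the route of~\cite{FILT} in the symmetric case, and it is substantial work, not a corollary of the covariance computation). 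The paper deliberately avoids this: it shows $\eta^T$ is $L^2$-close (uniformly in $T$) to the Wick square $\langle :X_T\otimes X_T:,\Psi^f_{\epsilon,\delta,\psi_\kappa}\rangle$ of the occupation functional $X_T$ of~\eqref{x_t}, invokes the continuity of Wick products under the convergence $X_T\Rightarrow X$ of Theorem~\ref{FBR} to get $:X_T\otimes X_T:\Rightarrow\; :X\otimes X:$, and then identifies $\langle :X\otimes X:,\cdot\rangle$ with a double Wiener--It\^{o} integral via Theorems 4.7 and 4.4 of~\cite{MAJOR}. Some such structural identification of the limit as a quadratic functional of a Gaussian field (or a full cumulant computation) is indispensable; without it your argument establishes at most tightness of second moments and the correct covariance, not the claimed convergence in law.
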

The quite explicit formulation of the approximating process in Theorem~\ref{mainARP} is what makes it particularly appealing.\\

The second question we set out to answer was whether the representation given by~\eqref{talrep} can be extended to $k$-Hermite processes for general $k\geq 2$. To formulate our result we must first introduce the so called \emph{$k$-intersection local time} ($k$-ILT), which is an extension of the notion of \emph{intersection local time} (ILT) and was first considered in~\cite{SI}. Informally $k$-ILT of cadlag processes $\rho^1,\ldots,\rho^k$ at time $T \geq 0$ (denoted here by $\Lambda^{(k)}$) can be defined by
\begin{multline}\label{kinter}
\left \langle \Lambda^{(k)}(\rho^1,\ldots,\rho^k;T),\phi \right \rangle =\\
=\int_{[0,T]^k}\phi(\rho^1_{s_1})\delta_0(\rho^2_{s_2}-\rho^1_{s_1})\ldots \delta_0(\rho^k_{s_k}-\rho^1_{s_1})ds_1\ldots ds_k,
\end{multline}
where $\delta_0$ is the Dirac distribution at $0$ and $\phi \in \mathcal{S}$ (the Schwartz space of rapidly decreasing function). One gives a meaning to~\eqref{kinter} by approximating $\delta_0$ by smooth fuctions. The precise definition and the proof of existence of ILT in the case of independent symmetric $\alpha$-stable L\'{e}vy processes is given in Section~\ref{kherm}, which is an extension of Proposition 5.1 in~\cite{PP}. The answer to the second question is provided by the following theorem.


\begin{theorem}\label{mainK}
Let $k \geq 2$ be a natural number and $\alpha \in (1-\frac{1}{k},1)$. For $T>0$ we denote
\begin{equation}\label{Kfun}
\rho^T_t:=\frac{1}{T^{k/2}}\sum_{j_1 \neq  \ldots \neq j_k}\sigma_{j_1}\ldots \sigma_{j_k}\langle \Lambda^{(k)}(x^{j_1}+\xi^{j_1},\ldots,x^{j_k}+\xi^{j_k};T),\mathbf{1}_{[0,t]} \rangle, t \geq 0.
\end{equation}
Then, as $T \rightarrow \infty$, the process $(\rho^T_t)_{t\geq 0}$ converges in the sense of finite-dimensional distributions to the $k$-Hermite process $Z_H^k$ with the Hurst coefficient equal to $H=1-(1-\alpha)k/2$.
\end{theorem}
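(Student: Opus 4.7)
The plan is to generalize the strategy of Bojdecki, Gorostiza and Talarczyk~\cite{FILT} from $k=2$ to arbitrary $k\geq 2$, aiming to identify the limiting characteristic function of $(\rho^T_{t_1},\ldots,\rho^T_{t_n})$ with that of the spectral representation~\eqref{spec_rep} of the $k$-Hermite process $Z^k_H$. The argument splits into three main stages: (a) regularize the $k$-intersection local time and express $\rho^T_t$ as an explicit integral functional of the Poisson--Rademacher particle system; (b) compute the joint characteristic function $\mathbb{E}\exp(i\sum_m\theta_m\rho^{T,\varepsilon}_{t_m})$ using the Laplace functional of the Poisson point process together with the independence and symmetry of the signs $\sigma_j$; (c) perform a stable scaling $u_\ell\mapsto T^{-1/\alpha} v_\ell$ in Fourier space together with the time rescaling $s_\ell\mapsto T r_\ell$, and match the limit to the multiple Wiener-It\^{o} integral of~\eqref{spec_rep}.

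For step (a), I would use the Gaussian regularization $\delta_0^{(\varepsilon)}(x)=(2\pi)^{-1}\int e^{iux-\varepsilon u^2}\,du$ in~\eqref{kinter} to obtain a smoothed process $\rho^{T,\varepsilon}_t$ whose summands factorize over distinct particles and produce factors $e^{i u_\ell(x^{j_\ell}+\xi^{j_\ell}_{s_\ell})}$ weighted by $\sigma_{j_\ell}$. For step (b) the crucial observation is that $\mathbb{E}[\sigma_{j_1}\cdots\sigma_{j_k}]$ vanishes on $\{j_1\neq\cdots\neq j_k\}$, and more generally the joint characteristic function can be computed by Campbell's formula---integrating over the positions $x^j$ produces an exponential whose Taylor expansion contains only even-order terms, and the lowest non-trivial surviving term is of order $k$, already encoding the $k$-th Wiener chaos structure of the limit. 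Computing the expectation over the stable paths via $\mathbb{E}e^{iu\xi^j_s}=e^{-s|u|^\alpha}$ then produces a closed form amenable to asymptotic analysis.

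In step (c), the stable scaling $u_\ell=T^{-1/\alpha}v_\ell$ together with $s_\ell=T r_\ell$ converts $e^{-s_\ell|u_\ell|^\alpha}$ into $e^{-r_\ell|v_\ell|^\alpha}$, and the $k$ time integrals together with the spatial Jacobians produce precisely the factor $T^{k/2}$ matching the normalization in~\eqref{Kfun}. The limiting integrand is
\[
c\,\prod_{\ell=1}^k |v_\ell|^{-(1-\alpha)/2}\cdot\frac{e^{i(v_1+\cdots+v_k)t}-1}{i(v_1+\cdots+v_k)},
\]
so that with $d=(1-\alpha)/2$ one recognizes the kernel of~\eqref{spec_rep} and the Hurst parameter $H=kd-k/2+1=1-(1-\alpha)k/2$. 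The multiplicative constant absorbs the combinatorial factor from the $k!$ orderings of the indices and the scalar $\int_0^1 e^{-r|v|^\alpha}\,dr$ factors.

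The main obstacle is to establish the existence and $L^2$-continuity of the $k$-intersection local time of independent symmetric $\alpha$-stable L\'{e}vy motions in the admissible range $\alpha\in(1-1/k,1)$, which is the content of Section~\ref{kherm} extending Proposition 5.1 of~\cite{PP}; it hinges on a borderline Fourier-space integrability estimate that forces the lower bound $\alpha>1-1/k$. A secondary technical point is to justify interchanging the limits $\varepsilon\to 0$ and $T\to\infty$, and to show that the ``near-diagonal'' contributions (terms in the expansion where the distinctness of $j_1,\ldots,j_k$ is only weakly enforced) are asymptotically negligible after normalization by $T^{-k/2}$, so that the chaos orthogonality underlying the Gaussian $k$-Hermite limit is preserved.
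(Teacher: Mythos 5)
Your stages (a) and the ``main obstacle'' paragraph correctly identify the preparatory work (regularization of the $k$-ILT, the integrability estimate forcing $\alpha>1-\tfrac1k$, uniform interchange of $\epsilon\to0$ and $T\to\infty$), and these match the paper's Lemmas~\ref{kiltex} and~\ref{majorconv} and claims \eqref{pa1}--\eqref{pa2}. The genuine gap is in your step (b). The functional $\rho^{T}_t$ is a $k$-linear form $\sum_{j_1\neq\cdots\neq j_k}\sigma_{j_1}\cdots\sigma_{j_k}F(x^{j_1},\ldots,x^{j_k})$ over \emph{distinct} indices of the marked Poisson system, and Campbell's formula (the Laplace/characteristic functional) only evaluates $\mathbb{E}\exp\bigl(i\sum_j g(x^j,\sigma_j)\bigr)$ for \emph{linear} functionals; there is no closed-form exponential for $\mathbb{E}\exp(i\theta\,\rho^T_t)$, and ``the lowest surviving term is of order $k$'' is not a substitute for an actual computation. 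This is exactly where the paper has to work hardest: it does not compute the characteristic function of the prelimit at all, but instead proves the $L^2$-approximation \eqref{hardpart}, $\mathbb{E}|\langle :X_T^{\otimes k}:,\Phi^f_{\epsilon,\psi_\kappa}\rangle-\rho^T_{f,\epsilon,\psi_\kappa}|^2\to0$, via a combinatorial analysis of diagonals (normal pairings reproduce $\mathbb{E}(\rho^T_\Phi)^2$, non-normal pairings cancel by an inclusion--exclusion identity $\sum_m(-1)^m\binom{2n}{m}=0$, large diagonals decay like $1/T$), and then passes to the limit through the convergence of Wick powers $:X_T^{\otimes k}:\Rightarrow\,:X^{\otimes k}:$ and the identity $\langle :X^{\otimes k}:,\Phi\rangle=\int''\widehat\Phi\,dZ_G\cdots dZ_G$. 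Your proposal contains no mechanism playing the role of this diagonal analysis. Note also that falling back on moments/cumulants (the route of~\cite{FILT} for $k=2$) is not available for $k\geq3$, since laws in higher Wiener chaoses need not be moment-determinate --- which is precisely why the paper switches to the spectral/It\^o identification.

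A secondary error is in step (c): there is no stable rescaling $u_\ell\mapsto T^{-1/\alpha}v_\ell$ in this particle picture. The spectral density $|u|^{-\alpha}$ arises from the potential-kernel asymptotics $\int_{[0,T]^2}e^{-|s-u|\,|z|^{\alpha}}\,ds\,du\sim 2T|z|^{-\alpha}$ after dividing by $T$ (one factor $T^{1/2}$ per copy of $X_T$), with the spatial Fourier variable left unscaled; this is what makes $\widehat{\mathbf{1}_{[0,t]}}(u_1+\cdots+u_k)=\dfrac{e^{i(u_1+\cdots+u_k)t}-1}{i(u_1+\cdots+u_k)}$ survive in the limit. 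Under your substitution the test-function factor would degenerate to the constant $t$, and the claimed kernel of \eqref{spec_rep} would not appear. (Your exponent bookkeeping is also off: the paper's $d$ in \eqref{spec_rep} corresponds to $|u_\ell|^{-d}$ with $d=\alpha/2$, giving $H=k\alpha/2-k/2+1=1-(1-\alpha)k/2$, rather than $|v_\ell|^{-(1-\alpha)/2}$.)
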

The main scheme of the proofs of Theorems~\ref{mainARP} and~\ref{mainK} is similar to the one employed in~\cite{FILT}, in particular the idea of using Wick products of an appropriate $\mathcal{S}'$-valued random variable. To stress some of the main differences and difficulties  that had to be overcome in our case let us point out that in the case of Theorem~\ref{mainARP} it was at first not at all clear what functional of a particle system can be used to approximate the non-symmetric Rosenblatt process. Also, since the functional is different, we need to use different approximations.

In case of Theorem~\ref{mainK} it was more or less clear that one should use~\eqref{Kfun} as the approximating process. However, due to the fact that now we have to deal with $k$-intersection local times and Wick products of order $k$ there are some non-trivial technical difficulties (see the proof of~\eqref{hardpart}).

Additionally, in the proof of the representation of the symmetric Rosenblatt process in~\cite{FILT} the identification of the limiting distribution was done using the cumulants and the fact that the finite-dimendional distributions of this process are determined bu its moments. In our paper we will take a different route and utilize the It\^{o} formula for multiple Wiener-It\^{o} integrals (Theorem 4.3 in~\cite{MAJOR}).

The paper is organized as follows. In Section~\ref{NB} we fix the notation and introduce some of the concepts which will be later used extensively to prove the results. Section~\ref{NSRP} contains proof of Theorem~\ref{mainARP} and in Section~\ref{RHP} we discuss the existence of $k$-intersection local time and prove Theorem~\ref{mainK}.

\section{Notation and Background}\label{NB}

\subsection{Notation}\label{sec_ilt}

Throughout the rest of the paper, by $\mathcal{S}(\mathbb{R}^d)$ we will denote the Schwartz space of \emph{real-valued} smooth rapidly decrasing functions on $\mathbb{R}^d, d\in \mathbb{N}$, $\mathcal{S}'(\mathbb{R}^d)$ will be the space tempered distributions. Let $\mathcal{F}$ denote the class of non-negative symmetric, infinitely differentiable functions on $\mathbb{R}$ with with support in $B(0,1)=\{x\in \mathbb{R}:|x|<1\}$ satisfying $\int_{\mathbb{R}}f(x)dx=1$. These functions will be used to to approximate Dirac delta distributions. For any $f\in \mathcal{F}$, $\epsilon >0$ put
\begin{equation*}
f_\epsilon (x)=\epsilon^{-d}f(\frac{x}{\epsilon}), \qquad x \in \mathbb{R}.
\end{equation*}
We will also use the following definition of the Fourier transform. For $\phi \in \mathcal{S}(\mathbb{R}^d)$ 
\begin{equation*}
\widehat{\phi}(x):=\int_{\mathbb{R}^d}e^{ixz}\phi(z)dz,\quad x \in \mathbb{R}^d.
\end{equation*}
Throughout the paper $\lambda_1$ will denote one-dimensional Lebesgue measure on $\mathbb{R}$ and $\Rightarrow$ will denote convergence in law.

Since we are interested in the convergence of stochastic processes in the sense of finite dimensional distributions it is convenient to introduce the following class of functions. Let $\mathcal{A}$ be the family of functions of the form
\begin{equation}\label{fdd}
\psi=\sum_{j=1}^m a_j\mathbf{1}_{I_j},
\end{equation}
where $a_j \in \mathbb{R}$ and $I_j$ is a bounded interval for each $j=1,\ldots,m$. For $g \in \mathcal{F}$ and $\psi \in \mathcal{A}$ we will write $\psi_\kappa:=\psi \ast g_\kappa$, without explicitly referring to the function $g$ to make the the notation more transparent. As it happens, we will always require that in the limit the particular choice of $g$ is irrelevant as far as our purposes are concerned. Notice that
\begin{equation}
\widehat{\psi_\kappa}(x)=\widehat{\psi}(x)\widehat{g_\kappa}(x)=\widehat{\psi}(x)\widehat{g}(\kappa x),
\end{equation}
so that
\begin{equation}
|\widehat{\psi_\kappa}(x)|\leq |\widehat{\psi}(x)|,\quad x\in \mathbb{R},
\end{equation}
since $|\widehat{g}(z)|\leq 1$ for all $z\in \mathbb{R}$.

\subsection{Generalized Gaussian Random Fields and Fractional Brownian Motion}\label{fields}
One of the important tools which we will be using in this paper are $\mathcal{S'}$-valued random variables. In particular, we will be working with centered Gaussian  $\mathcal{S'}$-valued random variables . For each $\alpha <1$ there exists a centered Gaussian $\mathcal{S'}$-random variable $X$ with covariance functional given by
\begin{equation}
\mathbb{E} \langle X,\phi \rangle \langle X,\psi \rangle=\frac{1}{\pi}\int_\mathbb{R}\widehat{\phi}(x)\overline{\widehat{\psi}(x)}|x|^{-\alpha}dx,\quad \phi,\psi \in \mathcal{S}(\mathbb{R}),
\end{equation}
The spectral measure of this field is given by $G(dx)=|x|^{-\alpha}dx$. As noted in~\cite{FILT}, for $\alpha \in (0,1)$,  $X$ can be approximated by the normalized total charge occupation of our particle system from Subsection~\ref{repr} on the interval $[0,T]$, that is by a functional given by
\begin{equation}\label{x_t}
\langle X_T,\phi \rangle=\frac{1}{\sqrt{T}}\sum_j \sigma_j \int_0^T\phi(x_j+\xi_s^j)ds,\quad \phi \in \mathcal{S}(\mathbb{R}). 
\end{equation}
By an $L^2$ extension we may evaluate $X$ on functions from a much wider class than $\mathcal{S}(\mathbb{R})$ and in fact $(\langle X,\mathbf{1}_{[0,t]
} \rangle )_{t \geq 0}$ is up to a constant the fractional Brownian motion with Hurst coeficient equal to $H=\frac{1+\alpha}{2}$. The particle system we are working with can be used to approximate this process as in the theorem below.
\begin{theorem}[Theorem 2.1 in~\cite{FILT}]\label{FBR}
For $\alpha \in (0,1)$, as $T \rightarrow \infty$, we have:
\begin{itemize}
\item[(i)]
$X_T \Rightarrow X$ in $\mathcal{S'}(\mathbb{R})$,
\item[(ii)]
$(\langle X_T,\mathbf{1}_{[0,t]} \rangle)_{t \geq 0}$ converges in the sense of finite dimensional distributions to $(KB_t^H)_{t \geq 0}$ where $K$ is a constant and $B^H$ is a fractional Brownian motion with Hurst coefficient $H=\frac{1+\alpha}{2}$.
\end{itemize}
\end{theorem}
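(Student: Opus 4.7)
The plan is to prove (i) by establishing convergence of the finite-dimensional distributions of $(\langle X_T,\phi_1\rangle,\ldots,\langle X_T,\phi_n\rangle)$ for $\phi_i\in\mathcal{S}(\mathbb{R})$ to the appropriate centered Gaussian vector, and then to promote this to weak convergence in $\mathcal{S}'(\mathbb{R})$ by Mitoma's theorem, noting that the limit covariance functional is continuous on $\mathcal{S}(\mathbb{R})$. For the f.d.d.\ step I would regard $(x_j,\sigma_j,\xi^j)_j$ as a marked Poisson point process on $\mathbb{R}\times\{-1,1\}\times D(\mathbb{R}_+)$ with intensity $dx\otimes\tfrac{1}{2}(\delta_{-1}+\delta_1)\otimes\mathbb{P}_\xi$, and apply Campbell's formula to any linear combination $\Phi=\sum_i u_i\phi_i$ to get
\begin{equation*}
\mathbb{E}\left[e^{i\langle X_T,\Phi\rangle}\right]=\exp\left(\int_\mathbb{R}\mathbb{E}_\xi\!\left[\cos\!\left(\tfrac{1}{\sqrt{T}}\int_0^T\Phi(x+\xi_s)\,ds\right)-1\right]dx\right),
\end{equation*}
where Rademacher symmetry has been used to replace the conditional exponential by a cosine.

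Next I would Taylor expand $\cos y-1$ around $y=0$. Using Fubini, translation invariance of Lebesgue measure and Parseval combined with the characteristic function $\mathbb{E}[e^{iz(\xi_r-\xi_s)}]=e^{-|r-s||z|^\alpha}$, the quadratic contribution to the exponent equals
\begin{equation*}
-\frac{1}{2T}\mathbb{E}_\xi\int\left(\int_0^T\Phi(x+\xi_s)\,ds\right)^2 dx=-\frac{1}{4\pi T}\int|\widehat\Phi(z)|^2\int_0^T\!\int_0^T e^{-|r-s||z|^\alpha}\,ds\,dr\,dz.
\end{equation*}
As $T\to\infty$ the inner double integral is asymptotic to $2T/|z|^\alpha$, so the exponent converges to $-\tfrac{1}{2\pi}\int|\widehat\Phi(z)|^2|z|^{-\alpha}\,dz=-\tfrac{1}{2}\mathbb{E}\langle X,\Phi\rangle^2$, which identifies the correct Gaussian limit. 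The higher order terms, controlled via $|\cos y-1+y^2/2|\leq C\min(y^2,|y|^3)$, reduce to showing $T^{-3/2}\int\mathbb{E}_\xi|\int_0^T\Phi(x+\xi_s)\,ds|^3\,dx\to 0$, which I would obtain from the Schwartz decay of $\Phi$ and the transience of the $\alpha$-stable motion.

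For (ii), the key observation is that $\int|\widehat{\mathbf{1}_{[0,t]}}(z)|^2|z|^{-\alpha}\,dz<\infty$ precisely when $\alpha\in(0,1)$, so $X$ extends by $L^2$ continuity to indicator test functions, and a direct spectral computation identifies the covariance
\begin{equation*}
\mathbb{E}\langle X,\mathbf{1}_{[0,s]}\rangle\langle X,\mathbf{1}_{[0,t]}\rangle=\tfrac{K^2}{2}\bigl(s^{2H}+t^{2H}-|t-s|^{2H}\bigr),\qquad H=\tfrac{1+\alpha}{2},
\end{equation*}
which is exactly that of $KB^H$. To transfer the convergence from smooth $\phi$ to $\mathbf{1}_{[0,t]}$ I would pick smooth approximants $\phi_\epsilon\to\mathbf{1}_{[0,t]}$ and use the same characteristic-functional computation to obtain a uniform-in-$T$ bound $\mathbb{E}|\langle X_T,\mathbf{1}_{[0,t]}-\phi_\epsilon\rangle|^2\leq C\int|\widehat{\mathbf{1}_{[0,t]}-\phi_\epsilon}(z)|^2|z|^{-\alpha}\,dz$, which tends to $0$ as $\epsilon\to 0$, letting a standard diagonal argument close the gap.

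The hard part will be the remainder estimate in the Taylor expansion: bounding the higher moments $\int\mathbb{E}_\xi|\int_0^T\Phi(x+\xi_s)\,ds|^p\,dx$ for $p\geq 3$ uniformly in $T$ is where the Schwartz decay of $\Phi$ and the transience of the $\alpha$-stable motion (assumed throughout for $\alpha\in(0,1)$) genuinely enter. Everything else, including the extension in (ii), ultimately rests on the clean bound $\mathbb{E}|\langle X_T,\psi\rangle|^2\leq C\int|\widehat\psi(z)|^2|z|^{-\alpha}\,dz$, which encodes the characteristic scaling of the particle system.
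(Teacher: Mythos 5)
This statement is imported verbatim from~\cite{FILT} (it is their Theorem 2.1), and the present paper gives no proof of it, so there is nothing internal to compare your argument against; what you have written is a self-contained proof, and it is essentially the standard one used in~\cite{FILT} and the earlier occupation-time-fluctuation literature of the same authors. Your argument is correct: Campbell's formula for the marked Poisson system with Rademacher marks does give the $\cos y-1$ exponent, the quadratic term is computed exactly as you say via Parseval and $\mathbb{E}e^{iz(\xi_r-\xi_s)}=e^{-|r-s||z|^\alpha}$ (with the domination $\frac{1}{T}\int_0^T\int_0^T e^{-|r-s||z|^\alpha}\,dr\,ds\leq 2|z|^{-\alpha}$ making the passage to the limit legitimate for $\alpha<1$), and the cubic remainder is $O(T^{-1/2})$ because $\int_{\mathbb{R}}\mathbb{E}_\xi\bigl|\int_0^T\Phi(x+\xi_s)\,ds\bigr|^3\,dx\leq CT$ by the Markov property and boundedness of $U|\Phi|$, which is exactly where transience enters, as you note. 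Two small points: for promoting pointwise convergence of the characteristic functionals of the single random variables $X_T$ to weak convergence in $\mathcal{S}'(\mathbb{R})$ the cleaner citation is the L\'evy continuity theorem for nuclear spaces (Mitoma's theorem is the tightness criterion for $\mathcal{S}'$-valued processes, though combining it with your f.d.d.\ convergence also works); and in part (ii) you should state that the uniform bound $\mathbb{E}\langle X_T,\psi\rangle^2\leq\frac{1}{\pi}\int|\widehat\psi(z)|^2|z|^{-\alpha}\,dz$ is applied to linear combinations $\psi=\sum_i u_i\mathbf{1}_{[0,t_i]}$ so that the approximation argument yields joint convergence at several times, not just marginal convergence. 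Neither affects the validity of the proof.
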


\begin{rem}\label{spectralm}
The random field $(\langle X,\phi \rangle )_{\phi \in \mathcal{S}(\mathbb{R})}$ can be used (see chapter 3 in~\cite{MAJOR} for details) to construct a random spectral measure $Z_G$ associated with this field such that $\langle X,\phi \rangle=\int_{\mathbb{R}}\widehat{\phi}(x)Z_G(dx)$ for $\phi \in \mathcal{S}$. We will use it extensively throughout this paper.
\end{rem}

\section{Non-symmetric Rosenblatt Process}\label{NSRP}

\subsection{Preliminaries}\label{arpprem}

Let us denote 
\begin{equation}\label{t1}
\langle \Delta(x+\xi^1,y+\xi^2;T),\phi \rangle=\int_0^T \int_0^T\phi(x+\xi^1_r)|y+\xi^2_s-x-\xi^1_r|^{\frac{\beta-\alpha}{2}-1}drds,
\end{equation}
where $\xi^1,\xi^2$ are independent symmetric $\alpha$-stable L\'{e}vy processes. According to Lemma~\ref{khermiteperm} in the Appendix to show that~\eqref{t1} is well defined (in the sense that the series converges in $L^2(\Omega)$) it suffices to show that for any $t >0$ and independent symmetric $\alpha$-stable processes $\xi^1$, $\xi^2$  we have 
\begin{equation}\label{t2}
\langle \Delta(\cdot+\xi^1,\cdot+\xi^2;T),\mathbf{1}_{[0,t]} \rangle \in L^2(\mathbb{R}\times \mathbb{R} \times \Omega,\lambda_1 \otimes \lambda_1 \otimes \mathbb{P}),
\end{equation}
which is done in the following lemma.
\begin{lem}\label{dod}
Let $\alpha,\beta \in (0,1)$ be such that $\alpha+\beta>1$, $\beta>\alpha$ and let $\phi\in L^1(\mathbb{R})$ be bounded. Then
\begin{equation}
I:=\mathbb{E}\left(\int_{\mathbb{R}^2}\left(\int_0^T\int_0^T\phi(x+\xi^1_r)|y+\xi^2_s-x-\xi^1_r|^{\gamma-1}dsdr\right)^2dxdy\right)\leq CT^2<\infty,
\end{equation}
where $\gamma:=\frac{\beta-\alpha}{2}$ and $C$ is a constant.
\end{lem}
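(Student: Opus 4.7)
\emph{Proof sketch.} The plan is to expand the square, perform the $y$- and $x$-integrations explicitly, use the stationarity of the stable increments to reduce the four-fold time integral to a two-parameter one, and then verify the integrability of the resulting effective kernel on $[0,\infty)^2$. The two standing hypotheses $\alpha+\beta>1$ and $\beta<1$ enter in complementary regimes of this last step.

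After expanding the square and applying Fubini, the $y$-integration is handled by the Riesz-type identity
\[
\int_{\mathbb R}|y+a|^{\gamma-1}|y+b|^{\gamma-1}\,dy \;=\; C_\gamma|a-b|^{2\gamma-1},
\]
which is valid precisely because $\gamma=(\beta-\alpha)/2\in(0,1/2)$. The $x$-integration then reduces to $(\phi\ast\tilde\phi)(\xi^1_{r'}-\xi^1_r)$ with $\tilde\phi(z):=\phi(-z)$; this convolution lies in $L^1\cap L^\infty$, with norms controlled by $\|\phi\|_1$ and $\|\phi\|_\infty$. After the change of time variables $\tau=r'-r$, $\sigma=s'-s$ and use of the stationary independent increments of $\xi^1,\xi^2$, I obtain
\[
I \;\le\; 4\,C_\gamma\,T^2\!\int_0^\infty\!\!\int_0^\infty \psi(\tau,\sigma)\,d\tau\,d\sigma, \qquad \psi(\tau,\sigma):=\mathbb{E}\bigl[(\phi\ast\tilde\phi)(\xi^1_\tau)\,|\xi^2_\sigma-\xi^1_\tau|^{2\gamma-1}\bigr],
\]
so it suffices to show that this double integral of $\psi$ is finite.

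Set $p:=(2\gamma-1)/\alpha=(\beta-\alpha-1)/\alpha$, so that $p>-2$ is equivalent to $\alpha+\beta>1$ and $p<-1$ is equivalent to $\beta<1$. The two basic inputs are (i) the joint scaling $\mathbb{E}|\xi^2_\sigma-\xi^1_\tau|^{2\gamma-1}=c(\sigma+\tau)^p$, which holds because the difference is symmetric $\alpha$-stable of scale $(\sigma+\tau)^{1/\alpha}$ and $\mathbb{E}|Z|^{2\gamma-1}<\infty$ for $Z$ standard symmetric $\alpha$-stable (since $2\gamma-1\in(-1,\alpha)$); and (ii) the uniform stable density bound $p_\tau(x)\le c\,\tau^{-1/\alpha}$, whence $\mathbb{E}(\phi\ast\tilde\phi)(\xi^1_\tau)\le c\|\phi\|_1^2\,\tau^{-1/\alpha}$. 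I would split $[0,\infty)^2$ into the four regions $[0,1]^2$, $\{\tau\le 1,\,\sigma\ge 1\}$, $\{\tau\ge 1,\,\sigma\le 1\}$, and $[1,\infty)^2$. On the first three, the crude bound $\psi\lesssim(\sigma+\tau)^p$ (from (i) combined with $\phi\ast\tilde\phi\in L^\infty$) yields integrability: on $[0,1]^2$ the saving condition is $p>-2$, i.e.\ $\alpha+\beta>1$, while on the two mixed regions it is $p<-1$, which ensures $\int_1^\infty\sigma^p\,d\sigma<\infty$ or $\int_1^\infty\tau^p\,d\tau<\infty$. The main obstacle is the regime $[1,\infty)^2$, where $(\sigma+\tau)^p$ is not integrable; here I would condition on $\xi^1_\tau$, apply the uniform bound $\mathbb{E}_{\xi^2}|\xi^2_\sigma-w|^{2\gamma-1}\le M\sigma^p$ with $M:=\sup_{w\in\mathbb R}\mathbb{E}|Z-w|^{2\gamma-1}<\infty$, and then invoke (ii) to obtain $\psi\lesssim\sigma^p\tau^{-1/\alpha}$; integrability on $[1,\infty)^2$ follows from $p<-1$ and $\alpha<1$. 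Summing the four contributions gives $\int_0^\infty\int_0^\infty\psi\,d\tau\,d\sigma<\infty$, and therefore $I\le CT^2$.
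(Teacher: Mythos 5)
Your argument is correct, and it takes a genuinely different route from the paper's. The paper integrates out the \emph{time} variables first: it rewrites $I$ with the $\alpha$-stable transition densities and uses the potential bound $\int_0^T\int_0^T p_{|u-r|}(x)\,dr\,du\leq 2CT|x|^{\alpha-1}$ (transience, $\alpha<1$), which immediately produces the factor $T^2$; it then performs two successive Riesz compositions in the spatial variables ($\gamma-1$ against $\alpha-1$, then $\tfrac{\alpha+\beta}{2}-1$ against $\gamma-1$, the latter needing $\beta<1$) to land on $\int\phi(x)\phi(z)|x-z|^{\alpha+\beta-2}\,dx\,dz$, finite by $\alpha+\beta>1$ and $\phi\in L^1\cap L^\infty$. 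You instead integrate out the \emph{space} variables first (one Riesz composition in $y$, a convolution in $x$), keep the time variables, and control $\int_0^\infty\int_0^\infty\psi$ by stable scaling with a four-region splitting. Both proofs use all the hypotheses, but yours makes their roles more legible: $\alpha+\beta>1$ is exactly $p>-2$ (integrability at the origin), $\beta<1$ is exactly $p<-1$ (integrability of $\sigma^p$ at infinity), and transience $1/\alpha>1$ handles the far field in $\tau$; the conditioning trick on $[1,\infty)^2$ is the one genuinely non-crude step and you have it right. The paper's version is marginally shorter because the $T^2$ and all exponent conditions fall out of a single final spatial integral. One small housekeeping item: state at the outset that you may replace $\phi$ by $|\phi|$ (as the paper does), so that $\psi\geq 0$ and the extension of the $(\tau,\sigma)$-integral from $[0,T]^2$ to $[0,\infty)^2$, as well as the region-by-region estimates, are legitimate; with signed $\phi$ you would otherwise need to carry absolute values through $\phi\ast\tilde\phi$.
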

\begin{proof}
Without loss of generality we may assume that $\phi\geq 0$. We have 
\begin{eqnarray}
I&=&\mathbb{E}\int_{\mathbb{R}^2}\int_{[0,T]^4}\phi(x+\xi^1_r)|y+\xi^2_s-x-\xi^1_r|^{\gamma-1}\nonumber\\
&& \:\phi(x+\xi^1_u)|y+\xi^2_v-x-\xi^1_u|^{\gamma-1}drdsdudv dxdy\\
&=&\mathbb{E}\int_{\mathbb{R}^2}\int_{[0,T]^4}\phi(x)|y-x|^{\gamma-1}\phi(x+\xi^1_u-\xi^1_r)\nonumber\\
&& \:|y+\xi^2_v-\xi^2_s-x-(\xi^1_u-\xi^1_r)|^{\gamma-1}drdsdudv dxdy\\
&=&\int_{\mathbb{R}^4}\int_{[0,T]^4}\phi(x)|y-x|^{\gamma-1}\phi(z)|w-z|^{\gamma-1}\nonumber\\
&& \:\phi(z)|w-z|^{\gamma-1}p_{|u-r|}(z-x)p_{|v-s|}(w-y)drdudsdvdxdydzdw,
\end{eqnarray}
where $p$ is the $\alpha$-stable transition density. The second equality uses Fubini theorem. Since for $x \neq 0$ $\int_0^\infty p_s(x)ds=C|x|^{\alpha-1}$ for some constant $C=C(\alpha)$ and 
\begin{equation}
\int_0^T \int_0^T p_{|u-r|}(x)drdu \leq \frac{2CT}{|x|^{1-\alpha}},
\end{equation}
we see that $I$ can be bounded by
\begin{equation}
C^2T^24\int_{\mathbb{R}^2}\phi(x)|y-x|^{\gamma-1}\phi(z)|w-z|^{\gamma-1}\frac{1}{|z-x|^{1-\alpha}}\frac{1}{|w-y|^{1-\alpha}}dxdydzw.
\end{equation}
Notice that
\begin{equation}
\int_\mathbb{R}|y-x|^{\gamma-1}|w-y|^{\alpha-1}dy=C'(\alpha,\beta)|w-x|^{\frac{\alpha+\beta}{2}-1},
\end{equation}
with $C'(\alpha,\beta)$ being a constant. Similarly
\begin{equation}
\int_\mathbb{R}|w-x|^{\frac{\alpha+\beta}{2}-1}|w-z|^{\gamma-1}dw=C''(\alpha,\beta)|z-x|^{\beta-1}.
\end{equation}
Thus
\begin{eqnarray}\label{crimea}
I&\leq&C'''(\alpha,\beta)T^2\int_{\mathbb{R}^2}\phi(x)\phi(z)|z-x|^{\alpha+\beta-2}.
\end{eqnarray}
The right-hand side of~\eqref{crimea} is finite since $\phi$ is bounded and in $L^1(\mathbb{R})$ and $\alpha+\beta>1$.
\end{proof}

Let us briefly discuss the main ideas behind the proof of convergence of finite-dimensional distributions of $\eta^T$ to those of the Rosenblatt process. We will show that the functional $\eta^T_t$ is close to $\langle :X_T \otimes X_T:,\Phi_t \rangle$, where $:X_T \otimes X_T:$ is the Wick product of the process $X_T$ defined by~\eqref{x_t} and $\Phi_t(x,y)=\mathbf{1}_{[0,t]}(x)|x-y|^{\frac{\beta-\alpha}{2}-1}$, $x,y\in \mathbb{R}$. Recall that the Wick product $:X_T \otimes X_T:$ is defined in the following way. First, for $\Phi \in \mathcal{S}(\mathbb{R}^2)$ of the form
\begin{equation}\label{t3}
\Phi=\sum_{j=1}^m \phi_j\otimes \psi_j,
\end{equation}
 where $\phi_j,\psi_j \in \mathcal{S}$, for $j=1,\ldots,m$, we set 
\begin{equation}\label{t4}
\langle :X_T\otimes X_T:,\Phi \rangle:= \sum_{j=1}^m\Big( \langle X_T,\phi_j \rangle \langle X_T,\psi_j \rangle - \mathbb{E}(\langle X_T,\phi_j \rangle \langle X_T,\psi_j \rangle)\Big).
\end{equation}
~\eqref{t4} can then be extended to arbitrary $\Phi \in \mathcal{S}(\mathbb{R}^2)$. Next we would like to use Theorem~\ref{FBR} to obtain that $\langle :X_T\otimes X_T:,\Phi_t \rangle$ converges, as $T \Rightarrow \infty$, in distribution to $\langle :X\otimes X:,\Phi_t \rangle$. Finally we will show that $\left(\langle :X\otimes X:,\Phi_t \rangle\right)_{t \geq 0}$ is up to a constant a non-symmetric Rosenblatt process. One of the difficulties lies in the fact that $\Phi_t$ is not in $\mathcal{S}(\mathbb{R}^2)$ and we must approximate it by functions from the Schwartz space. $\langle :X\otimes X:,\Phi_t \rangle$ is then understood as a limit under these approximations.

We now proceed to discussing our approximating functions and some of their properties. For convenience let us set $\gamma=\frac{\beta-\alpha}{2}$. We are going to approximate the function $y\mapsto |y|^{\gamma-1}$ by the convolution $\int_\mathbb{R}|y-z|^{\gamma-1}f_\epsilon(z)dz$ (where $f\in \mathcal{F}$ and $\epsilon>0$), and then use the fact that as $\epsilon \rightarrow 0$ the integral converges (up to a constant depending only on $\gamma$) to $|y|^{-\gamma}$. However, the function $y \mapsto \int_\mathbb{R}|y-z|^{\gamma-1}f_\epsilon(z)dz$ still does not belong to $\mathcal{S}(\mathbb{R})$ as it vanishes slowly. To overcome this obstacle 
take $\delta \in (0,1)$, let $h_\delta(x):=|x|^{\gamma-1}\mathbf{1}_{\delta < |x| < \frac{1}{\delta}}$ and put 
\begin{equation}\label{vdef}
V^\delta \phi(x):=\int_\mathbb{R}h_\delta(x-y)\phi(y)dy,\quad \phi \in \mathcal{S},x \in \mathbb{R}.
\end{equation}
We approximate the function $y\mapsto |y|^{\gamma -1}$ by $V^\delta f_\epsilon$. Let us also define $V\phi(x):=\lim_{\delta \rightarrow 0_+}V^\delta \phi(x)$. This limit exists as long as $\int_\mathbb{R} |x-y|^{\gamma-1}|\phi(y)|dy < \infty$. It is easy to see that for $f \in C^\infty_c(\mathbb{R})$, $V^\delta f \in C^\infty_c(\mathbb{R})$ for any $\delta \in (0,1)$. The Fourier transform of $V^\delta \phi$ is given by
\begin{equation}\label{vtrans}
\widehat{V^\delta \phi}(x)=\widehat{h_\delta}(x)\widehat{\phi}(x),\quad x \in \mathbb{R},
\end{equation}
for $\phi \in \mathcal{S}$.
In the sequel we will need several of properties of operators $V^\delta$. We list them in the lemma below.
\begin{lem}\label{vprop}
Let $f$ be in $\mathcal{F}$. The operators $V^\delta$ defined by~\ref{vdef} have the following properties:
\begin{itemize}
\item[(i)] for each $\delta \in (0,1)$ $V^\delta f$ is nondecreasing as $\delta \searrow 0$ and 
\begin{equation}\label{v2def}
Vf(x)=\lim_{\delta \rightarrow 0_+}V^\delta f(x);
\end{equation}
\item[(ii)]
\begin{equation}
\lim_{\epsilon \rightarrow 0_+} Vf_\epsilon(x)=|x|^{\gamma-1},\quad x \neq 0;
\end{equation}
\item[(iii)]
for each $\epsilon>0$ 
\begin{equation}
Vf_\epsilon(x)\leq \norm{f}_\infty \frac{2^{2-\gamma}}{\gamma}|x|^{\gamma-1}.
\end{equation}
\end{itemize}
\end{lem}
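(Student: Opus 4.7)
The three items have very different flavors and I would treat them in order.

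For (i), note that $V^{\delta}f(x) = \int_{\mathbb{R}} |x-y|^{\gamma-1} \mathbf{1}_{\{\delta < |x-y| < 1/\delta\}} f(y)\, dy$. Since $f \in \mathcal{F}$ is nonnegative and the indicator $\mathbf{1}_{\{\delta < |x-y| < 1/\delta\}}$ increases pointwise to $\mathbf{1}_{\{x \neq y\}}$ as $\delta \searrow 0$, the integrand is pointwise nondecreasing, so monotone convergence yields both the monotonicity and $V^{\delta}f(x) \nearrow Vf(x) = \int_{\mathbb{R}} |x-y|^{\gamma-1} f(y)\, dy$. The limit is finite because $f$ is bounded with compact support and $y \mapsto |x-y|^{\gamma-1}$ is locally integrable (as $\gamma \in (0,1/2)$).

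For (ii), fix $x \neq 0$ and pick $\epsilon < |x|/2$. Since $f_\epsilon$ is supported in $[-\epsilon,\epsilon]$, for $y \in \mathrm{supp}(f_\epsilon)$ we have $|x-y| \geq |x|/2 > 0$, so $y \mapsto |x-y|^{\gamma-1}$ is continuous on the support. Writing
\begin{equation*}
Vf_\epsilon(x) - |x|^{\gamma-1} = \int_{\mathbb{R}} \bigl(|x-y|^{\gamma-1} - |x|^{\gamma-1}\bigr) f_\epsilon(y)\, dy
\end{equation*}
and using $\int f_\epsilon = 1$ together with the approximate identity property of $(f_\epsilon)$, this tends to $0$ as $\epsilon \to 0_+$, giving $\lim_{\epsilon \to 0_+} Vf_\epsilon(x) = |x|^{\gamma-1}$.

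For (iii), which I view as the main technical point, I would split into two cases. When $|x| \geq 2\epsilon$, for $y \in \mathrm{supp}(f_\epsilon)$ one has $|x-y| \geq |x| - \epsilon \geq |x|/2$, so $|x-y|^{\gamma-1} \leq 2^{1-\gamma}|x|^{\gamma-1}$, and integrating against $f_\epsilon$ gives $Vf_\epsilon(x) \leq 2^{1-\gamma}|x|^{\gamma-1}$. When $|x| < 2\epsilon$, bound $Vf_\epsilon(x) \leq \|f_\epsilon\|_\infty \int_{-\epsilon}^{\epsilon} |x-y|^{\gamma-1}\, dy$. The key observation is that $x \mapsto \int_{x-\epsilon}^{x+\epsilon} |u|^{\gamma-1}\, du$ is maximized at $x=0$ (differentiate and use that $|u|^{\gamma-1}$ is even and decreasing in $|u|$), so this integral is at most $\int_{-\epsilon}^{\epsilon}|u|^{\gamma-1}\,du = 2\epsilon^\gamma/\gamma$. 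Combined with $\|f_\epsilon\|_\infty = \epsilon^{-1}\|f\|_\infty$ and $\epsilon^{\gamma-1} \leq 2^{1-\gamma}|x|^{\gamma-1}$ (valid since $\gamma-1<0$ and $\epsilon > |x|/2$), this yields $Vf_\epsilon(x) \leq \frac{2^{2-\gamma}}{\gamma}\|f\|_\infty |x|^{\gamma-1}$. To reconcile the first case with the claimed bound, observe that $\int f = 1$ and $\mathrm{supp}(f)\subseteq (-1,1)$ force $\|f\|_\infty \geq 1/2$, while $\gamma < 1/2$, so $\gamma \leq 2\|f\|_\infty$ and hence $2^{1-\gamma} \leq \frac{2^{2-\gamma}}{\gamma}\|f\|_\infty$, which absorbs Case A into the same bound.

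The main obstacle is purely computational: squeezing out the constant $2^{2-\gamma}/\gamma$ rather than something worse like $3^\gamma \cdot 2^{1+\gamma}/\gamma$. The trick is the monotonicity lemma that the integral of $|u|^{\gamma-1}$ over an interval of fixed length $2\epsilon$ is maximized by centering at $0$; without it, the naive estimate $|x-y| \leq |x|+\epsilon \leq 3\epsilon$ loses a factor of $3^\gamma$.
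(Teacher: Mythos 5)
Your proposal is correct and follows essentially the same route as the paper: the paper also treats (i) and (ii) as immediate (monotone convergence and the approximate-identity property) and proves (iii) by the identical case split at $|x|=2\epsilon$, with the bound $2^{1-\gamma}|x|^{\gamma-1}$ in the far case and the estimate $\frac{1}{\epsilon}\int_{-\epsilon}^{\epsilon}|y|^{\gamma-1}\,dy$ combined with $\epsilon>|x|/2$ in the near case. Your write-up is in fact slightly more careful, since you make explicit the centering observation (the integral of $|u|^{\gamma-1}$ over an interval of length $2\epsilon$ is maximized at the origin) that the paper uses silently, and you reconcile the constant from the far case with the stated bound.
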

\begin{proof}
Parts $(i)$ and $(ii)$ are obvious. 
To prove part $(iii)$ fix $\epsilon>0$ and consider two cases. Assume first that $|x|\geq 2\epsilon$. Then $Vf_\epsilon(x) \leq 1/|x-\epsilon|^{1-\gamma} \leq 2^{1-\gamma}|x|^{\gamma-1}$. If $|x| < 2\epsilon$ then 
\begin{equation}
Vf_\epsilon(x)=\int_{-\epsilon}^\epsilon \frac{1}{|x-y|^{1-\gamma}}\frac{1}{\epsilon}f\left(\frac{y}{\epsilon}\right)dy \leq \norm{f}_\infty \frac{1}{\epsilon}\int_{-\epsilon}^\epsilon \frac{1}{|y|^{1-\gamma}}dy=\norm{f}_\infty \frac{2^{2-\gamma}}{\gamma}|x|^{\gamma-1},
\end{equation}
and this finishes the proof.
\end{proof}

Now we can define the approximating functional which will be at the center of our investigation. Let $\epsilon,\delta \in (0,1),f\in \mathcal{F}$. Mimicking~\eqref{t1}, for any $\phi \in \mathcal{S}(\mathbb{R})$ and a pair of real-valued cadlag processes $\eta,\xi$ we put
\begin{equation}\label{QILTdefi}
\langle \Delta_{\epsilon,\delta}^f(\eta,\xi;T),\phi \rangle :=\int_0^T \int_0^T \phi(\eta_u)(V^\delta f_\epsilon)(\xi_v-\eta_u)dudv.
\end{equation}
For $\phi \geq 0$ the above integral converges pointwise as $\delta \rightarrow 0$ and then $\epsilon \rightarrow 0$ by Lemma~\ref{vprop} (to a possibly infinite limit) independently of the choice of $f$. This limit is given by
\begin{equation}\label{QILT}
\langle \Delta(\eta,\xi;T),\phi \rangle=\int_0^T \int_0^T\phi(\eta_u)|\xi_v-\eta_u|^{\gamma-1}dudv,\quad  \phi \in \mathcal{S}.
\end{equation}
Now we proceed to show that in the setting that interests us most ($\eta=x+\xi^1, \xi=y+\xi^2$ for independent symmetric $\alpha$-stable L\'{e}vy processes $\xi^1$ ,$\xi^2$ and $x,y\in \mathbb{R}$) the random variables given by~\eqref{QILTdefi} and~\eqref{QILT} are meaningful.
We can only show that $\Delta(\eta_1,\eta_2;T)$ exists as an $\mathcal{S'}$-valued random variable in the setting in which $\alpha,\beta \in (1/2,1)$. The convergece of $\langle \Delta_{\epsilon,\delta}^f(\eta_1,\eta_2;T),\phi \rangle$ in $L^2(\Omega)$ for fixed $x,y \in \mathbb{R}$ remains an open question. However, we will only need the following. 
\begin{lem}\label{L2conv}
Let $\xi^1,\xi^2$ be independent symmetric $\alpha$-stable processes with $\alpha \in (0,1)$.
\begin{itemize}
\item[(i)]
For every $\phi \in \mathcal{S}(\mathbb{R}),T>0$ the function given by
\begin{equation}
(x,y,\omega)\mapsto \langle \Delta_{\epsilon,\delta}^f(x+\xi^1,y+\xi^2;T),\phi \rangle
\end{equation}
converges in $L^2(\mathbb{R}^2 \times \Omega, \lambda_1 \otimes \lambda_1 \otimes \mathbb{P})$, as $\epsilon,\delta$ go to $0$, to the function 
\begin{equation}\label{fire}
(x,y,\omega) \mapsto \Delta(x,y,\omega)=\int_0^T \int_0^T \phi(x +\xi^1_r(\omega))|y +\xi^2_s(\omega) - x -\xi^2_r(\omega)|^{\frac{\beta-\alpha}{2}-1}drds.
\end{equation}
\item[(ii)]
The $L^2$-convergence as in (i) holds also if we replace $\phi$ by any function $\psi \in \mathcal{A}$. Moreover, for $\phi$ of the form $\phi=\psi_\kappa=\psi \ast g_\kappa$, where $\kappa \in (0,1)$ and $g \in \mathcal{F}$, the convergence is uniform in $\kappa$.
\end{itemize}
Here $\lambda_1$ is the one dimensional Lebesgue measure and $\mathbb{P}$ is the underlying probability measure.
\end{lem}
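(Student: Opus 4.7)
The plan is to execute a dominated-convergence argument on $L^2(\mathbb{R}^2 \times \Omega)$ built from three ingredients already at hand: the pointwise limit $V^\delta f_\epsilon(z) \to |z|^{\gamma-1}$ for $z \neq 0$ (Lemma~\ref{vprop}(ii)), the uniform pointwise bound $V^\delta f_\epsilon(z) \leq C|z|^{\gamma-1}$ with $C = \norm{f}_\infty 2^{2-\gamma}/\gamma$ independent of $\epsilon,\delta$ (Lemma~\ref{vprop}(i),(iii)), and the $L^2$-estimate of Lemma~\ref{dod}, which says that for bounded $\phi \in L^1(\mathbb{R})$ the envelope
$$G_\phi(x,y,\omega) := \int_0^T\int_0^T |\phi(x+\xi^1_u)|\,|y+\xi^2_v-x-\xi^1_u|^{\gamma-1}\,du\,dv$$
belongs to $L^2(\mathbb{R}^2 \times \Omega,\lambda_1 \otimes \lambda_1 \otimes \mathbb{P})$.

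For part (i), fix $\phi \in \mathcal{S}(\mathbb{R})$. Since $G_\phi \in L^2$, it is finite for a.e.\ $(x,y,\omega)$, so for each such triple I would apply dominated convergence inside the $du\,dv$ integral, using pointwise convergence from Lemma~\ref{vprop}(ii) together with the integrable dominator $C|\phi(x+\xi^1_u)|\,|y+\xi^2_v-x-\xi^1_u|^{\gamma-1}$, to obtain the pointwise limit $\langle \Delta^f_{\epsilon,\delta}(x+\xi^1,y+\xi^2;T),\phi\rangle \to \Delta(x,y,\omega)$. The same estimates dominate the difference uniformly in $\epsilon,\delta$ by $(C+1)G_\phi \in L^2(\mathbb{R}^2 \times \Omega)$, and a second dominated-convergence argument on the outer space then yields the required $L^2$-convergence.

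For part (ii), the same scheme applies verbatim to any fixed $\psi \in \mathcal{A}$, since such $\psi$ is bounded and compactly supported, so Lemma~\ref{dod} still produces an $L^2$-dominator. The nontrivial point is uniformity in $\kappa$ when $\phi = \psi_\kappa = \psi \ast g_\kappa$. Here I would exploit the $\kappa$-independent envelope $|\psi_\kappa| \leq \norm{\psi}_\infty \mathbf{1}_J =: \tilde\psi$, where $J$ is the $1$-neighbourhood of $\mathrm{supp}(\psi)$ (so $\mathrm{supp}(\psi_\kappa) \subset J$ for every $\kappa \in (0,1)$). Then
$$\big|\langle \Delta^f_{\epsilon,\delta}(x+\xi^1,y+\xi^2;T),\psi_\kappa\rangle - \langle \Delta(x+\xi^1,y+\xi^2;T),\psi_\kappa\rangle\big| \leq H_{\epsilon,\delta}(x,y,\omega),$$
where
$$H_{\epsilon,\delta}(x,y,\omega) := \int_0^T\int_0^T \tilde\psi(x+\xi^1_u)\,\big|V^\delta f_\epsilon(y+\xi^2_v-x-\xi^1_u) - |y+\xi^2_v-x-\xi^1_u|^{\gamma-1}\big|\,du\,dv$$
does not depend on $\kappa$. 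Re-running the argument of part (i) with $\tilde\psi$ in place of $\phi$ shows $H_{\epsilon,\delta} \to 0$ in $L^2(\mathbb{R}^2 \times \Omega)$, which is exactly the required uniformity. The main obstacle is precisely this uniform step; once the envelope $\tilde\psi$ is identified, all remaining calculations reduce to routine invocations of Lemmas~\ref{vprop} and~\ref{dod}.
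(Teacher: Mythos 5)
Your proposal is correct and follows essentially the same route as the paper: dominate the difference via Lemma~\ref{vprop}(iii), use Lemma~\ref{dod} to obtain an $L^2(\mathbb{R}^2\times\Omega)$ envelope independent of $\epsilon,\delta$, and conclude by dominated convergence using the pointwise limits of Lemma~\ref{vprop}(i)--(ii). The only difference is that you spell out the uniformity in $\kappa$ (via the $\kappa$-independent envelope $\tilde\psi$), which the paper leaves as ``straightforward.''
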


\begin{proof}[Proof of Lemma~\ref{L2conv}]
The proof is quite straightforward once we have established Lemma~\ref{dod}. We have
\begin{multline}\label{put}
\mathbb{E}\int_{\mathbb{R}^2}\left(\langle \Delta_{\epsilon,\delta}^f(x+\xi^1,y+\xi^2;T),\phi \rangle - \langle \Delta(x+\xi^1,y+\xi^2;T),\phi \rangle\right)^2 dxdy
\\
\leq \mathbb{E}\int_{\mathbb{R}^2}\Bigg(\int_{[0,T]^2}|\phi(x+\xi^1_u)|\Big|V_{f_\epsilon}^\delta(y+\xi^2_v-x-\xi^1_u)\\
-|y+\xi^2_v-x-\xi^1_u|^{\gamma-1}\Big|dudv\Bigg)^2dxdy\\
\leq  \mathbb{E}\int_{\mathbb{R}^2}\left(\int_{[0,T]^2}|\phi(x+\xi^1_u)|C_1(f,\gamma)|y+\xi^2_v-x-\xi^1_u|^{\gamma-1}dudv\right)^2dxdy,
\end{multline}
which is finite by Lemma~\ref{dod}. In the second inequality in~\eqref{put} we have used part (iii) of Lemma~\ref{vprop}. Now, using parts (i) and (ii) of the same Lemma and dominated convergence theorem we get the desired convergence. Evidently, the particular choice of $f$ from $\mathcal{F}$ is irrelevant. The rest of the proof is now straightforward.
\end{proof}

\subsection{Proof of Theorem~\ref{mainARP}}

\begin{proof}[Proof of Theorem~\ref{mainARP}]
The proof of Theorem~\ref{mainARP} more or less follows the line of reasoning of the proof of Theorem 3.5 in~\cite{FILT}. We will study the behavior of the following functional, which approximates the functional in the statement of Theorem~\ref{mainARP}:
\begin{equation}\label{mainapprox}
\eta_{f,\phi,\epsilon,\delta}^T:=\frac{1}{T}\sum_{j \neq k}\sigma_j \sigma_k \langle \Delta_{\epsilon,\delta}^f(x^j + \xi^j,x^k+\xi^k;T),\phi \rangle,
\end{equation}
where, as before $\epsilon,\delta \in (0,1), f \in \mathcal{F}, \phi \in \mathcal{S}(\mathbb{R}), T>0$. It is well defined by Lemma~\ref{khermiteperm}. Moreover, the above functional converges in $L^2(\Omega)$, as $\epsilon,\delta \rightarrow 0$ (uniformly in $T\geq 1$ and independently of the choice of $f \in \mathcal{F}$) to a random variable $\eta^T_{\phi}$ given by
\begin{equation}
\eta^T_{\phi}:=\frac{1}{T}\sum_{j \neq k}\sigma_j \sigma_k \langle \Delta(x^j + \xi^j,x^k+\xi^k;T),\phi \rangle,
\end{equation}
which again follows from Lemmas~\ref{L2conv} and~\ref{khermiteperm}. Recall that $\gamma=(\beta-\alpha)/2$. Let $\psi$ be any function from $\mathcal{A}$ and $\Psi_{\epsilon,\delta,\phi}^f(x,y):=\phi(x)(V^\delta f_\epsilon)(y-x)$, where $V^\delta$ is defined by~\eqref{vdef}. We will show the convergence, as $T\rightarrow \infty$, of the characteristic function of $\eta_{T,\psi_\kappa}$ to the characteristic function of the finite-dimensional distributions of the non-symmetric Rosenblatt process. Using inequality $|\mathbb{E}(e^{iX})-\mathbb{E}(e^{i\widetilde{X}})|\leq 2\mathbb{E}|X-\widetilde{X}|\leq 2\left(\mathbb{E}|X-\widetilde{X}|^2\right)^{\frac{1}{2}}$ (for real valued random variables $X$ and $\widetilde{X}$, it is enough to show that:

\begin{equation}\label{part0}
\underset{\kappa \rightarrow 0}{lim}\quad\underset{T \geq 1}{sup}\quad \mathbb{E}\left|\eta_\psi^T-\eta_{\psi_\kappa}^T\right|^2=0,
\end{equation}

\begin{equation}\label{part1}
\underset{\epsilon,\delta \rightarrow 0}{lim}\quad\underset{T \geq 1}{sup}\underset{\kappa \in (0,1)}{sup}\mathbb{E}\left|\eta_{f,\psi_\kappa,\epsilon,\delta}^T-\eta_{\psi_\kappa}^T\right|^2=0,
\end{equation}

\begin{equation}\label{part2}
\underset{T \rightarrow \infty}{lim}\underset{\kappa \in (0,1)}{sup}\mathbb{E}\left|\langle :X_T\otimes X_T:,\Psi_{\epsilon,\delta,\phi}^f \rangle - \eta_{f,\phi,\epsilon,\delta}^T\right|^2 = 0,\quad \epsilon,\delta>0,\phi \in \mathcal{S},
\end{equation}

\begin{equation}\label{part3}
\langle :X_T\otimes X_T:,\Psi_{\epsilon,\delta,\phi}^f \rangle \underset{T \rightarrow \infty}{\Rightarrow} \langle :X\otimes X:,\Psi_{\epsilon,\delta,\phi}^f \rangle,\quad \epsilon,\delta>0,\phi \in \mathcal{S},
\end{equation}

\begin{equation}\label{part4}
\underset{\epsilon,\delta,\kappa \rightarrow \infty}{lim}\mathbb{E}\left|\langle:X\otimes X:,\Psi_{\epsilon,\delta,\psi_\kappa}^f \rangle - \int_{\mathbb{R}^2}^{''} \widehat{\psi}(x+y)|y|^{-\gamma}Z_G(dx)Z_G(dy)\right|^2=0,
\end{equation}
where $Z_G$ is the random spectral measure as in Remark~\ref{spectralm}. 

Similarly as in the proof of Lemma~\ref{L2conv} it is easy to show (using dominated convergence theorem) that~\eqref{part0} holds. It is enough to notice that $|\psi_\kappa| \leq |\psi|$. Lemma~\ref{L2conv} and Lemma 8.1 from~\cite{PP} give us~\eqref{part1} (for details see the proof of equation (6.26) in~\cite{FILT}). The proof of~\eqref{part2} is very similar to the proof of equation (6.27) in~\cite{FILT} so we will only sketch it. Recalling~\eqref{x_t} we may write

\begin{eqnarray}
\langle :X_T\otimes X_T:,\Psi_{\epsilon,\delta,\phi}^f \rangle &=& \frac{1}{T} \int_{[0,T]^2}\Big(\sum_{j,k} \sigma_j \sigma_k \Psi_{\epsilon,\delta,\phi}^f(x^j+\xi^j_s,x^k+\xi^k_u) \nonumber \\
&& -\: \int_{\mathbb{R}} \mathbb{E}\Psi_{\epsilon,\delta,\phi}^f(x+\xi^1_s,x+\xi^1_u)dx \Big) dsdu \nonumber \\
&=& \eta_{T,\phi,\epsilon,\delta}^f + \frac{1}{T}\int_{[0,T]^2} \Big(\sum_j \Psi_{\epsilon,\delta,\phi}^f(x^j+\xi^j_s,x^j+\xi^j_u) \nonumber \\
&& -\: \int_\mathbb{R}\mathbb{E}\Psi_{\epsilon,\delta,\phi}^f(x+\xi^1_s,x+\xi^1_u)dx \Big) dsdu.
\end{eqnarray}
This implies that, again by Lemma~\ref{fractional} in the Appendix,
\begin{multline}
\mathbb{E}\left|\langle :X_T\otimes X_T:,\Psi_{\epsilon,\delta,\phi}^f \rangle - \eta_{T,\phi,\epsilon,\delta}^f\right|^2 = \\
\frac{1}{T} \int_{[0,T]^4} \int_\mathbb{R} \mathbb{E}\Psi_{\epsilon,\delta,\phi}^f(x+\xi^1_s,x+\xi^1_u)\Psi_{\epsilon,\delta,\phi}^f(x+\xi^1_r,x+\xi^1_v)dx dsdudrdv.
\end{multline}
The rest of the argument is exactly the same as in the proof of equation (6.26) in~\cite{FILT} because $V^\delta f_\epsilon \in \mathcal{S}(\mathbb{R})$ implies that $\Psi_{\epsilon,\delta,\phi}^f$ is in $\mathcal{S}(\mathbb{R}^2)$. The convergence in~\eqref{part3} follows from Lemma 6.3 in~\cite{FILT}. For the proof of~\eqref{part4} we will look at Hermite processes from the point of view of multiple Wiener-Ito integrals. From Theorem 4.7 in~\cite{MAJOR} it follows that for any $\Phi \in \mathcal{S}(\mathbb{R}^2)$
\begin{equation}
\langle :X\otimes X:,\Phi \rangle = \int_{\mathbb{R}^2}^{''} \widehat{\Phi}(x,y)Z_G(dx)Z_G(dy),
\end{equation}
where $Z_G$ is the random spectral measure, as in Remark~\ref{spectralm}, corresponding to the spectral measure $G(dx)=|x|^{-\alpha}dx$. Hence, using~\eqref{vtrans},
\begin{equation}
\langle :X\otimes X:,\Psi_{\epsilon,\delta,\psi_\kappa}^f \rangle =\frac{1}{2\pi} \int_{\mathbb{R}^2}^{''} \widehat{\psi_\kappa}(x+y)\widehat{f_\epsilon}(y)\widehat{h_\delta}(y)Z_G(dx)Z_G(dy).
\end{equation}
By dominated convergence and $L^2$ isometry $\langle :X\otimes X:,\Psi_{\epsilon,\delta,\psi_\kappa}^f \rangle$ converges in $L^2(\Omega)$ as $\epsilon,\delta,\kappa \rightarrow 0$ to $\frac{1}{2\pi}\int_{\mathbb{R}^2}^{''} \widehat{\psi}(x+y)|y|^{-\gamma}Z_G(dx)Z_G(dy)$, which by the change of variables formula for multiple Wiener-It\^o integrals (Theorem 4.4 in~\cite{MAJOR}), is equal to
\begin{equation}
\frac{1}{2\pi}\int_{\mathbb{R}^2}^{''} \widehat{\psi}(x+y)|x|^{-\frac{\alpha}{2}}|y|^{-\frac{\beta}{2}}\widehat{W}(dx)\widehat{W}(dy),
\end{equation}
where $\widehat{W}$ is a complex-valued Fourier transform of white noise (see discussion after equation (2.6) in~\cite{FILT}). When we replace $\psi$ with $\mathbf{1}_{[0,t]}$, then (using definition 3.8 in~\cite{BT} and spectral representation discussed in the following remarks) we can define
\begin{equation}
Z_t^T:=\eta_{T,\mathbf{1}_{[0,t]}},\quad t \geq 0,
\end{equation}
then $(Z_t^T)_{t \geq 0}$ converges (up to a constant), as $T \rightarrow \infty$, in the sense of finite dimensional distributions, to the non-symmetric $(\alpha,\beta)$-Rosenblatt process.
\end{proof}










\section{Representation of Hermite Processes}\label{RHP}

\subsection{$k$-intersection local time of independent $\alpha$-stable processes}\label{kherm}

Following \cite{PP} we would like to extend the definition of intersection local time to \emph{k-intersection local time}. For $\epsilon>0,\phi \in \mathcal{S}(\mathbb{R}), f \in \mathcal{F}$ and an integer $k \geq 2$ put
\begin{equation}\label{kiltdefi}
\Phi_{\epsilon,\phi}^f(x_1,\ldots,x_k):=\phi(x_1)f_\epsilon(x_2-x_1) \ldots f_\epsilon(x_k-x_1).
\end{equation}
Note that $\Phi_{\epsilon,\phi}^f \in \mathcal{S}(\mathbb{R}^k)$ and 
\begin{equation}
\widehat{\Phi_{\epsilon,\phi}^f}(x_1,\ldots,x_k)=\widehat{\phi}(x_1+\ldots+x_k)\widehat{f_\epsilon}(x_2) \ldots \widehat{f_\epsilon}(x_k).
\end{equation}
Using~\eqref{kiltdefi} we can define the \emph{approximate intersection local time} of $k$ real valued cadlag stochastic processes. For any cadlag processes $\rho_1,\ldots,\rho_k$ taking values in $\mathbb{R}$, $\phi \in \mathcal{S}(\mathbb{R})$ and $f \in \mathcal{F}$ we denote the approximate intersection local time at time $T>0$ by 
\begin{multline}
\left \langle \Lambda_\epsilon^f(\rho^1,\ldots,\rho^k;T),\phi \right \rangle =\\
=\int_{[0,T]^k}\phi(\rho^1_{s_1})f_\epsilon(\rho^2_{s_2}-\rho^1_{s_1})\ldots f_\epsilon(\rho^k_{s_k}-\rho^1_{s_1})ds_1\ldots ds_k.
\end{multline}
\begin{defi}
If there exists an $\mathcal{S'}$-valued random variable $\Lambda^{(k)}(\rho_1,\ldots,\rho_k)$ such that for each $\phi \in \mathcal{S}$ and $f \in \mathcal{F}$ $ \left \langle \Lambda_\epsilon^f(\rho^1,\ldots,\rho^k;T),\phi \right \rangle$ converges to \\
$\langle \Lambda^{(k)}(\rho_1,\ldots,\rho_k),\phi \rangle$ in $L^2(\Omega)$and the limit is independent of the choice of $f \in \mathcal{F}$ then $\Lambda^{(k)}$ is called the $k$-intersection local time of $\rho_1,\ldots,\rho_k$.
\end{defi}
We have the following extension of Proposition 5.1 in~\cite{PP}.
\begin{lem}\label{kiltex}
Let $\xi_1,\ldots,\xi_k$ be independent $\alpha$-stable L\'{e}vy processes with $\alpha \in (1-\frac{1}{k},1)$. Then for any starting points $x_1,\ldots,x_k \in \mathbb{R}$ and $\phi \in \mathcal{S}$ the k-intersection local time
 $\left \langle \Lambda^{(k)}(x_1+\xi^1,\ldots,x_k+\xi^k;T),\phi \right \rangle$ exists. Moreover $\Lambda^{(k)}$ can be evaluated for any function $\phi$ in $\mathcal{A}$.
\end{lem}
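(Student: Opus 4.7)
The plan is to prove that the approximating random variables $\langle\Lambda_\epsilon^f(x_1+\xi^1,\ldots,x_k+\xi^k;T),\phi\rangle$ form a Cauchy sequence in $L^2(\Omega)$ as $\epsilon\to 0$, with the limit independent of the choice of mollifier $f\in\mathcal{F}$; this is done first for $\phi\in\mathcal{S}$, and then extended to $\phi\in\mathcal{A}$. The scheme will follow Proposition 5.1 of~\cite{PP}: Fourier inversion combined with independence of the stable processes $\xi^1,\ldots,\xi^k$ converts the problem into a deterministic integrability question on $\mathbb{R}^{2k}$.

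Using the identity $\widehat{\Phi_{\epsilon,\phi}^f}(u_1,\ldots,u_k) = \widehat{\phi}(u_1+\cdots+u_k)\prod_{j=2}^k\widehat{f}(\epsilon u_j)$ already recorded in the excerpt, Fourier inversion gives
$$\langle\Lambda_\epsilon^f(x_1+\xi^1,\ldots,x_k+\xi^k;T),\phi\rangle = (2\pi)^{-k}\int_{\mathbb{R}^k}\widehat{\phi}(u_1+\cdots+u_k)\prod_{j=2}^k\widehat{f}(\epsilon u_j)\prod_{j=1}^k e^{-iu_jx_j}J_T^j(u_j)\,du,$$
with $J_T^j(u):=\int_0^T e^{-iu\xi^j_s}\,ds$. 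Expanding $\mathbb{E}|\langle\Lambda_\epsilon^f,\phi\rangle - \langle\Lambda_{\epsilon'}^g,\phi\rangle|^2$ and using independence of the L\'evy processes to factorize the joint expectation, one obtains an integral over $\mathbb{R}^{2k}$ whose integrand contains a mollifier-difference factor $D_{\epsilon,\epsilon'}^{f,g}(u,v)$ bounded by $4$ and tending pointwise to $0$ as $\epsilon,\epsilon'\to 0$ (because $\widehat f(0)=\widehat g(0)=1$). The goal is then to apply dominated convergence to conclude that the $L^2$-difference vanishes.

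The main obstacle is producing an integrable dominating function. Computing $\mathbb{E}[J_T^j(u)\overline{J_T^j(v)}]$ by splitting on $\{s<t\}\cup\{t<s\}$ and using the symmetric $\alpha$-stable characteristic function $\mathbb{E}e^{-iu\xi^j_s}=e^{-cs|u|^\alpha}$ gives the bound
$$\bigl|\mathbb{E}[J_T^j(u)\overline{J_T^j(v)}]\bigr|\le C\,\frac{1-e^{-cT|u-v|^\alpha}}{|u-v|^\alpha}\Bigl(\frac{1}{|u|^\alpha}+\frac{1}{|v|^\alpha}\Bigr)\wedge T^2,$$
so everything reduces to verifying, for each of the finitely many cross-terms,
$$\int_{\mathbb{R}^{2k}}|\widehat{\phi}(u_1{+}\cdots{+}u_k)|\,|\widehat{\phi}(v_1{+}\cdots{+}v_k)|\prod_{j=1}^k\frac{1}{|u_j|^\alpha|u_j-v_j|^\alpha}\,du\,dv<\infty.$$
After the change of variables $w_j:=u_j-v_j$ and integrating out $u_1,\ldots,u_k$ along a fibration adapted to the constraint $\sum u_j = U$, iterated application of the Riesz composition formula on $\mathbb{R}$, namely $|\cdot|^{-\alpha}\ast\cdots\ast|\cdot|^{-\alpha}(x) = C_k|x|^{-(k\alpha-(k-1))}$ for the $k$-fold convolution (valid precisely when $\alpha\in(1-1/k,1)$, with each convolution step requiring $n\alpha>n-1$), and then controlling the two remaining one-dimensional integrals in the diagonal directions $\sum u_j$ and $\sum w_j$ via the Schwartz decay of $\widehat{\phi}$, the integrability is seen to hold exactly under $\alpha>1-1/k$. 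This telescoping via Riesz composition is the main technical point and pinpoints the hypothesis of the lemma.

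With the dominating bound in hand, DCT yields both Cauchyness in $L^2(\Omega)$ and independence of the limit from the choice of $f\in\mathcal{F}$, producing $\Lambda^{(k)}(x_1+\xi^1,\ldots,x_k+\xi^k;T)$ as an $\mathcal{S}'$-valued random variable. To extend to $\phi\in\mathcal{A}$, approximate $\psi=\sum a_j\mathbf{1}_{I_j}$ by $\psi_\kappa:=\psi\ast g_\kappa\in\mathcal{S}$; since $|\widehat{\psi_\kappa}|\le|\widehat{\psi}|$ and $\widehat{\mathbf{1}_{[a,b]}}(u)=O(1/|u|)$ at infinity, the dominating estimate above applies uniformly in $\kappa$, and a diagonal argument letting $\kappa\to 0$ together with the already-established $L^2$ convergence in $\epsilon$ identifies $\langle\Lambda^{(k)}(x_1+\xi^1,\ldots,x_k+\xi^k;T),\psi\rangle$ as the common $L^2$ limit.
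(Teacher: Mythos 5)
Your proposal is correct in substance and follows the paper's general scheme up to a point: both reduce the $L^2(\Omega)$-Cauchy property (and independence of the mollifier) to a deterministic integrability statement on $\mathbb{R}^{2k}$, using the product form of $\widehat{\Phi^f_{\epsilon,\phi}}$, independence of the $\xi^j$, and the bound $\int_{[0,T]^2}|\mathbb{E}e^{-iu\xi_s+iv\xi_t}|\,ds\,dt\lesssim |u-v|^{-\alpha}(|u|^{-\alpha}+|v|^{-\alpha})$ coming from the symmetric stable characteristic function (the paper states the equivalent bound with $1/(1+|\cdot|^\alpha)$ factors and the sum $z+z'$ in place of the difference, a harmless sign convention). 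Where you genuinely diverge is the key integrability step. The paper proves $\int_{\mathbb{R}^{2k}}|\widehat\phi(\sum z_j)\widehat\phi(\sum w_j)|\prod_j b(z_j,w_j)<\infty$ by a H\"older trick: it splits each factor $b(z_j,w_j)=b^{\lambda}b^{1-\lambda}$, groups the pieces into $k$ functions, and applies H\"older with exponent $k$, reducing everything to one-dimensional integrability of $(1+|z|^\alpha)^{-(1-\lambda)k/(k-1)}$, which holds for $\lambda$ small precisely when $\alpha>1-1/k$. You instead integrate the singular kernels out exactly via iterated Riesz composition, $\int_{\mathbb{R}^k}F(\textstyle\sum u_j)\prod_j|u_j|^{-\alpha}du=C\int F(U)|U|^{-(k\alpha-k+1)}dU$, whose validity at each step is again exactly $\alpha>1-1/k$; this is sharper (it tracks the exact power-law exponent of the reduced two-dimensional integral) but requires you to be slightly more careful than you are about the $2^k$ mixed cross-terms $\prod_{j\in S}|u_j|^{-\alpha}\prod_{j\notin S}|v_j|^{-\alpha}$, where the change of variables $w_j=u_j-v_j$ couples the diagonal variable $\sum u_j$ to some of the $w_j$; this is routine but should be spelled out. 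Your two-stage treatment of $\phi\in\mathcal{A}$ (mollify by $g_\kappa$, use $|\widehat{\psi_\kappa}|\le|\widehat\psi|$ and the $O(1/|x|)$ decay) is also slightly more elaborate than the paper's, which simply observes that the final estimate only uses $|\widehat\phi(x)|\le C/(1+|x|)$, valid for $\phi$ in $\mathcal{S}$ or $\mathcal{A}$ alike.
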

\begin{proof}
To prove the lemma it is enough to show that for any $f,g \in \mathcal{F}$, $x_1,\ldots,x_k \in \mathbb{R}$ and each $\phi \in \mathcal{S}$, the limit 
\begin{equation}\label{srw}
\lim_{\epsilon,\delta \rightarrow 0}\mathbb{E}\left(\left \langle \Lambda^f_\epsilon(x_1+\xi^1,\ldots,x_k+\xi^k;T),\phi \right \rangle \left \langle \Lambda^g_\delta(x_1+\xi^1,\ldots,x_k+\xi^k;T),\phi \right \rangle\right)
\end{equation}
exists and does not depend on the choice of $f$ and $g$. The proof is at first very similar to the proof of Proposition 3.3 and 5.1 in~\cite{PP}. Writing out the expectation in~\eqref{srw} using the $\alpha$-stable transition densities, passing to the Fourier transform, using Plancherel formula and then using the estimate
\begin{equation*}
\int_{[0,T]^2}|\widehat{\mu}_{s,u}(z,z')|dsdu \leq C(T)\frac{1}{1+|z+z'|^\alpha}(\frac{1}{1+|z|^\alpha}+\frac{1}{1+|z'|^\alpha}),
\end{equation*}
where $C(T)$ is a constant and $\mu_{s,u}$ is the law of $(\xi^1_s,\xi^1_u)$, the proof is reduced to showing that
\begin{eqnarray}\label{holder}
I&=&\int_{\mathbb{R}^{2k}} |\widehat{\phi}(z_1+\ldots+z_k)\widehat{\phi}(w_1+\ldots+w_k)|\\
&& \: \times b(z_1,w_1)\ldots b(z_k,w_k)dz_1\ldots dz_k dw_1 \ldots dw_k < \infty,\nonumber 
\end{eqnarray}
where $b(z,w)=C(T)\frac{1}{1+|z+w|^\alpha}(\frac{1}{1+|z|^\alpha}+\frac{1}{1+|w|^\alpha})$. To show that we will use H\"{o}lder inequality. First, fix some $\lambda \in (0,1)$. Rewrite equation~\eqref{holder} as follows.
\begin{eqnarray}\label{holder2}
I&=&\int_{\mathbb{R}^{2k}} |\widehat{\phi}(z_1+\ldots+z_k)^{\frac{k}{k}}\widehat{\phi}(w_1+\ldots+w_k)^{\frac{k}{k}}|\\
&& \: \times b(z_1,w_1)^{\lambda \frac{k}{k}}\ldots b(z_k,w_k)^{\lambda \frac{k}{k}}\nonumber \\
&& \: \times b(z_1,w_1)^{(1-\lambda) \frac{k-1}{k-1}}\ldots b(z_k,w_k)^{(1-\lambda) \frac{k-1}{k-1}}dz_1\ldots dz_k dw_1 \ldots dw_k. \nonumber 
\end{eqnarray}
The integrand can be written as $g_1(\mathbf{z},\mathbf{w})\ldots g_k(\mathbf{z},\mathbf{w})$, where 
\begin{eqnarray}
g_j(\mathbf{z},\mathbf{w})&=&|\widehat{\phi}(z_1+\ldots+z_k)\widehat{\phi}(w_1+\ldots+w_k)|^{\frac{1}{k}}h(\mathbf{z},\mathbf{w})^{\frac{\lambda}{k}}\\
&& \: \times b(z_1,w_1)^{(1-\lambda)\frac{1}{k-1}}\ldots b(z_{j-1},w_{j-1})^{(1-\lambda)\frac{1}{k-1}}\nonumber \\
&& \: \times b(z_{j+1},w_{j+1})^{(1-\lambda)\frac{1}{k-1}}\ldots b(z_k,w_k)^{(1-\lambda)\frac{1}{k-1}},\nonumber 
\end{eqnarray}
and $h(\mathbf{z},\mathbf{w})=b(z_1,w_1)\ldots b(z_k,w_k)$. By H\"{o}lder inequality
\begin{equation}\label{prodholder}
I\leq\prod_{j=1}^k \left(\int_{\mathbb{R}^{2k}}g(\mathbf{z},\mathbf{w})^k d\mathbf{z} d\mathbf{w} \right)^\frac{1}{k}
\end{equation}
For $\phi \in \mathcal{S}$ or $\mathcal{A}$, $|\widehat{\phi}(x)|\leq \frac{C}{1+|x|},\quad x \in \mathbb{R}$, where $C$ is a constant. Now, taking $\lambda$ close enough to $0$, we have $\frac{(1-\lambda)k\alpha}{k-1}>1$. This implies that each factor in~\eqref{prodholder} is finite.
\end{proof}

In fact we will not need this ``pointwise'' sort of convergence and we will only utilize a weaker result (which is an analogue of Lemma~\ref{L2conv}) to be able to formulate the main theorem of this section in a rigorous way. 
\begin{lem}\label{majorconv}
Assume that $\alpha \in (1-\frac{1}{k},1)$. Then $\left \langle \Lambda_\epsilon^f(\cdot+\xi^1,\ldots,\cdot+\xi^k;T),\phi \right \rangle$ converges in $L^2(\mathbb{R}^k \times \Omega,\lambda_k\otimes \mathbb{P})$ as $\epsilon \rightarrow 0$ for any $\phi \in \mathcal{S}(\mathbb{R})$ and the limit is independent of he choice of $f\in \mathcal{F}$. Moreover, if we replace $\phi$ by any function of the form $\psi_\kappa$ as in~\eqref{fdd}, the convergence is uniform in $\kappa \in (0,1)$. We also denote this limit by $\left \langle \Lambda^{(k)}(\cdot+\xi^1,\ldots,\cdot+\xi^k;T),\phi \right \rangle$. 
\end{lem}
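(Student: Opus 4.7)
The plan is to mimic Lemma~\ref{L2conv}: establish a Fourier--side domination uniform in $\epsilon,\epsilon'\in(0,1)$ and $f,g\in\mathcal{F}$ (and, for the uniformity claim, in $\kappa\in(0,1)$), and then invoke dominated convergence together with the pointwise limit $\widehat f(\epsilon\lambda)\widehat g(\epsilon'\lambda)\to 1$. Concretely, setting
\[
A^{f,g}_{\epsilon,\epsilon'}:=\mathbb{E}\int_{\mathbb{R}^k}\bigl\langle\Lambda_\epsilon^f(\cdot+\xi^1,\dots,\cdot+\xi^k;T),\phi\bigr\rangle\bigl\langle\Lambda_{\epsilon'}^g(\cdot+\xi^1,\dots,\cdot+\xi^k;T),\phi\bigr\rangle\,d\mathbf{x},
\]
it is enough to prove that $A^{f,g}_{\epsilon,\epsilon'}$ converges to a limit independent of $f,g$, since the polarization identity $\mathbb{E}\int|\cdot-\cdot|^2\,d\mathbf{x}=A^{f,f}_{\epsilon,\epsilon}-2A^{f,g}_{\epsilon,\epsilon'}+A^{g,g}_{\epsilon',\epsilon'}$ will then deliver the Cauchy property in $L^2(\mathbb{R}^k\times\Omega)$.

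The first step is to integrate out $(x_1,\ldots,x_k)$, which decouples the particles. Integrating each $x_j$, $j\geq 2$, turns $f_\epsilon(\cdot)g_{\epsilon'}(\cdot)$ into $(f_\epsilon\ast g_{\epsilon'})(\xi^j_{u_j}-\xi^j_{s_j}-\xi^1_{u_1}+\xi^1_{s_1})$ (using that $f,g$ are even), while integrating $x_1$ turns the two $\phi$-factors into $(\phi\ast\check\phi)(\xi^1_{u_1}-\xi^1_{s_1})$, where $\check\phi(y):=\phi(-y)$. Expressing each convolution by Fourier inversion, using independence of $\xi^1,\dots,\xi^k$ together with $\mathbb{E}[e^{i\lambda(\xi_t-\xi_s)}]=e^{-|t-s||\lambda|^\alpha}$, and carrying out the $[0,T]^2$ time integrals by means of the elementary bound $\int_0^T\!\int_0^T e^{-|u-s|b}\,du\,ds\le C_T/(1+b)$, I would arrive at
\[
A^{f,g}_{\epsilon,\epsilon'}=\frac{1}{(2\pi)^k}\int_{\mathbb{R}^k}|\widehat\phi(\lambda_1)|^2\prod_{j=2}^{k}\widehat f(\epsilon\lambda_j)\widehat g(\epsilon'\lambda_j)\,\Theta\!\Bigl(\lambda_1-\sum_{j=2}^k\lambda_j\Bigr)\prod_{j=2}^k\Theta(\lambda_j)\,d\lambda,
\]
with $0\le\Theta(a)\le C_T/(1+|a|^\alpha)$. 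Since $|\widehat f|,|\widehat g|\le 1$, the integrand is dominated, uniformly in $\epsilon,\epsilon'\in(0,1)$ and $f,g\in\mathcal{F}$, by
\[
\frac{C_T\,|\widehat\phi(\lambda_1)|^2}{(1+|\lambda_1-\sum_{j\ge 2}\lambda_j|^\alpha)\prod_{j\ge 2}(1+|\lambda_j|^\alpha)}.
\]

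The crux is showing that this dominating function is integrable over $\mathbb{R}^k$. After the (Jacobian one) substitution $\mu_1=\lambda_1-\sum_{j\ge 2}\lambda_j$, $\mu_j=\lambda_j$ for $j\ge 2$, the task reduces to
\[
\int_{\mathbb{R}^k}\frac{|\widehat\phi(\mu_1+\cdots+\mu_k)|^2}{\prod_{j=1}^k(1+|\mu_j|^\alpha)}\,d\mu<\infty.
\]
For $\phi\in\mathcal{S}$ the Fourier transform $\widehat\phi$ is rapidly decreasing, while for $\phi=\psi_\kappa$ with $\psi\in\mathcal{A}$ one has $|\widehat{\psi_\kappa}(x)|\le|\widehat\psi(x)|\le C/(1+|x|)$ uniformly in $\kappa\in(0,1)$. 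I would then apply the Hölder splitting trick from the proof of Lemma~\ref{kiltex} (see~\eqref{holder2}) with a small parameter $\theta>0$ chosen so that $(1-\theta)k\alpha/(k-1)>1$, combined with a region decomposition according to the relative sizes of $|\mu_1+\cdots+\mu_k|$ and of the individual $|\mu_j|$, to obtain finiteness precisely when $\alpha>1-1/k$. Heuristically, in the regime $|\mu_j|\sim r$ the integrand decays like $r^{-k\alpha}$ against the $(k-1)$-dimensional radial volume $r^{k-1}\,dr$, yielding the threshold $k\alpha>k-1$.

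With the uniform domination in hand, dominated convergence together with $\widehat f(\epsilon\lambda)\widehat g(\epsilon'\lambda)\to 1$ pointwise gives convergence of $A^{f,g}_{\epsilon,\epsilon'}$ to the limit obtained by dropping the $\widehat f\,\widehat g$ factors, which is manifestly independent of $f,g\in\mathcal{F}$; polarization then yields the desired $L^2(\mathbb{R}^k\times\Omega)$ convergence. For the moreover part, the bound $|\widehat{\psi_\kappa}|\le|\widehat\psi|$ allows the dominating function to be taken independent of $\kappa$, so the convergence is uniform in $\kappa\in(0,1)$. I expect the main obstacle to be the integrability estimate in the third paragraph: the condition $\alpha>1-1/k$ is sharp, and verifying it rigorously requires precisely the Hölder manipulation of Lemma~\ref{kiltex}, now adapted to the one-sided $|\widehat\phi|^2$ weight appearing in the present integral.
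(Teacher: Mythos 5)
Your proposal is correct and follows essentially the same route as the paper: the paper likewise reduces the lemma, via the Fourier/characteristic-function computation you sketch, to the finiteness of $\int_{\mathbb{R}^k}|\widehat{\phi}(x_1+\cdots+x_k)|^2|x_1|^{-\alpha}\cdots|x_k|^{-\alpha}\,dx$ (its~\eqref{holdem2}), and proves that by a generalized H\"{o}lder inequality. The only (stylistic) difference is that the paper's H\"{o}lder step needs neither your small parameter $\theta$ nor a region decomposition: because $|\widehat{\phi}|^2$ is already integrable in a single variable, writing $\prod_j(1+|x_j|^\alpha)^{-1}=\prod_j s_j$ with $s_j=\prod_{i\neq j}(1+|x_i|^\alpha)^{-1/(k-1)}$ and applying H\"{o}lder with exponent $k$ closes the argument directly, since $k\alpha/(k-1)>1$ exactly when $\alpha>1-\frac{1}{k}$.
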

The proof of this lemma is similar to the case when $k=2$ and amounts to showing that that
\begin{equation}\label{holdem2}
\int_{\mathbb{R}^k}|\widehat{\phi}(x_1+\ldots +x_k)|^2|x_1|^{-\alpha}\ldots |x_k|^{-\alpha}dx_1\ldots dx_k <\infty,
\end{equation}
for $\phi \in \mathcal{S}(\mathbb{R})$ or $\mathcal{A}$. In the symmetric case above~\eqref{holdem2} follows by H\"{o}lder inequality, similarly as in Lemma~\ref{kiltex}. Indeed, putting
\begin{equation*}
s_j(x_1,\ldots,x_k):=\prod_{i=1,i\neq j}^k \Big({\frac{1}{1+|x_i|^\alpha}}\Big)^{1/(k-1)}
\end{equation*}
$j=1,\ldots,k$, we get  
\begin{multline}
\int_{\mathbb{R}^k}|\widehat{\phi}(x_1+\ldots +x_k)|^2\frac{1}{1+|x_1|^{\alpha}}\ldots \frac{1}{1+|x_k|^{\alpha}}dx_1\ldots dx_k \leq \\
\leq \prod_{j=1}^k\left(\int_{\mathbb{R}^k}|\widehat{\phi}(x_1+\ldots +x_k)|^2 s_j(x_1,\ldots,x_k)^k dx_1\ldots dx_k\right)^\frac{1}{k},
\end{multline}
which is finite since, by assumption, $\alpha>1-\frac{1}{k}$.
\begin{rem}
In fact one can prove that for any choice of $\alpha_1,\ldots,\alpha_k \in (0,1)$ satisfying $\alpha_1+\ldots+\alpha_k>k-1$ and any $f\in L^1(\mathbb{R})$
\begin{equation}
\int_{\mathbb{R}^k}|\widehat{f}(x_1+\ldots +x_k)|^2|x_1|^{-\alpha_1}\ldots |x_k|^{-\alpha_k}dx_1 \ldots dx_k <\infty,
\end{equation}
but the proof is a little more complicated.
\end{rem}

In order to use the properties of $\mathcal{S'}$-valued random variables we introduce the following approximating functional:
\begin{equation}\label{kiltapprox}
\rho_{f,\epsilon,\phi}^T:=\frac{1}{T^{k/2}}\sum_{j_1 \neq  \ldots \neq j_k}\sigma_{j_1}\ldots \sigma_{j_k}\langle \Lambda_\epsilon^f(x^{j_1}+\xi^{j_1},\ldots,x^{j_k}+\xi^{j_k};T),\phi \rangle,
\end{equation}
where $T>0,\epsilon>0,f\in \mathcal{F},\phi \in \mathcal{S}(\mathbb{R})$. It is well defined by Lemmas~\ref{khermiteperm} and~\ref{majorconv}. The same Lemmas also show that the functional given by
\begin{equation}
\rho_{\psi_\kappa}^T:=\frac{1}{T^{k/2}}\sum_{j_1 \neq  \ldots \neq j_k}\sigma_{j_1}\ldots \sigma_{j_k}\langle \Lambda^{(k)}(x^{j_1}+\xi^{j_1},\ldots,x^{j_k}+\xi^{j_k};T),\psi_\kappa \rangle,
\end{equation}
is well defined for any $\kappa \in (0,1)$, $T>0$, $\psi \in \mathcal{A}$ and is an $L^2(\Omega)$-limit of the functional in~\ref{kiltapprox} with $\phi$ replaced by $\psi_\kappa$.
\subsection{Proof of the representation}

\begin{proof}[Proof of Theorem~\ref{mainK}]
The proof follows the footsteps of the proof of Theorem 3.5 in~\cite{FILT} with some necessary generalizations. From now on we fix $\alpha \in (1-\frac{1}{k},1)$ and $f \in \mathcal{F}$. We are going to prove the following claims:

\begin{equation}\label{pa1}
\underset{\kappa \rightarrow 0}{lim}\quad\underset{T \geq 1}{sup}\quad \mathbb{E}\left|\rho_{\psi}^T-\rho_{\psi_\kappa}^T\right|^2=0,
\end{equation}

\begin{equation}\label{pa2}
\underset{\epsilon \rightarrow 0}{lim}\quad\underset{T \geq 1}{sup}\quad \underset{\kappa \in (0,1)}{sup}\quad \mathbb{E}\left|\rho_{\psi_\kappa}^T-\rho_{f,\epsilon,\psi_\kappa}^T\right|^2=0,
\end{equation}

\begin{equation}\label{hardpart}
\lim_{T\rightarrow \infty} \underset{\kappa \in (0,1)}{sup}\mathbb{E}\left|\langle :X_T \otimes \ldots \otimes X_T:,\Phi_{\epsilon,\psi_\kappa}^f \rangle - \rho_{f,\epsilon,\psi_\kappa}^T\right|^2=0,\quad \epsilon >0,
\end{equation}

\begin{equation}\label{pa4}
\langle :X_T \otimes \ldots \otimes X_T:,\Phi_{\epsilon,\psi_\kappa}^f \rangle \Rightarrow \langle :X \otimes \ldots \otimes X:,\Phi_{\epsilon,\psi_\kappa}^f \rangle,\quad,\epsilon>0,\kappa >0,
\end{equation}

\begin{eqnarray}\label{pa5}
\lim_{\epsilon \rightarrow 0} \underset{\kappa \in (0,1)}{sup}  \mathbb{E}\bigg|\int_{\mathbb{R}^k}^{''}\widehat{\psi_\kappa}(x_1+\ldots +x_k)Z_G(dx_1)\ldots Z_G(dx_k)\nonumber\\
-\langle:X \otimes \ldots \otimes X:,\Phi_{\epsilon,\psi_\kappa}^f \rangle\bigg|^2=0,
\end{eqnarray}

\begin{eqnarray}\label{pa6}
\lim_{\kappa \rightarrow 0}\mathbb{E}\bigg|\int_{\mathbb{R}^k}^{''}\widehat{\psi_\kappa}(x_1+\ldots +x_k)Z_G(dx_1)\ldots Z_G(dx_k)-\nonumber \\
\int_{\mathbb{R}^k}^{''}\widehat{\psi}(x_1+\ldots +x_k)Z_G(dx_1)\ldots Z_G(dx_k)\bigg|^2=0.
\end{eqnarray}
Here $:Z\otimes \ldots \otimes Z:$ stands for the $k$-th Wick product of the random variable $Z$. For definition see equation~\eqref{defwick}. From Lemma~\ref{khermiteperm} we have (with $F$ replaced by $\langle (\Lambda_\epsilon^f-\Lambda),\psi_\kappa \rangle$) the following inequality:
\begin{eqnarray}
\mathbb{E}|\rho_{\psi_\kappa}^T-\rho_{f,\epsilon,\psi_\kappa}^T|^2 &\leq& \frac{2k!}{T^k}\int_{\mathbb{R}^k}\mathbb{E}|\langle (\Lambda_\epsilon^f-\Lambda^{(k)})(x_1+\xi^1,\ldots,x_k+\xi^k;T),\psi_\kappa \rangle|^2\nonumber \\
&\leq& 2k!2^k\int_{\mathbb{R}^k}|\widehat{\psi_\kappa}(x_1+\ldots +x_k)|^2|\widehat{f_\epsilon}(x_1+\ldots +x_k)-1|^2\nonumber\\
&& \times\:|x_1|^{-\alpha}\ldots |x_k|^{-\alpha}dx_1\ldots dx_k.
\end{eqnarray}
By~\eqref{holdem2}, dominated convergence theorem and the fact that $\widehat{\psi_\kappa}(z)\leq \widehat{\psi}(z)$ for $z\in\mathbb{R}$ we get~\eqref{pa2}. The proof of~\eqref{pa1} is very similar and we skip it.

The hardest part is to prove~\eqref{hardpart}. From~\cite{FILT} we know that (iii) holds for $k=2$. Let $\Phi$ be of the form
\begin{equation}\label{appform}
\Phi=\sum_{j=1}^m \phi^{(1,j)}\otimes \ldots \otimes \phi^{(k,j)},
\end{equation}
where each $\phi^{(s,t)}$ is in $\mathcal{S}(\mathbb{R})$ for $s=1,\ldots,k,\quad t=1,\ldots,m$. By definition
\begin{multline}\label{defwick}
\langle :X_T\otimes \ldots \otimes X_T:,\Phi \rangle = \\
\sum_{j=1}^m \sum_{A \in \mathcal{M}}(-1)^{|A|} \prod_{\{s,t\} \in A}\mathbb{E}(\langle X_T,\phi^{(s,j)} \rangle) \mathbb{E}(\langle X_T,\phi^{(t,j)} \rangle) 
\prod_{n\notin \cup A}\langle X_T,\phi^{(n,j)} \rangle,
\end{multline}
where $\mathcal{M}$ is the set of unordered pairs $\{s,t\} \subset \{1,\ldots,k\}$, such that all the elements in these pairs are distinct. In particular $|\bigcup A|=2|A|$. The sum above is over all distinct sets $A$ of this form including the empty set. If we define the approximating functional by
\begin{equation}
\rho_{\Phi}^T:=\frac{1}{T^{k/2}}\sum_{j_1 \neq \ldots \neq j_k}\sigma_{j_1}\ldots \sigma_{j_k}\int_{[0,T]^k}\Phi(x^{j_1}+\xi^{j_1}_{s_1},\ldots,x^{j_k}+\xi^{j_k}_{s_k})ds_1 \ldots ds_k,
\end{equation}
then one can easily see that
\begin{equation}\label{reduceform}
\mathbb{E}(\langle :X_T\otimes \ldots \otimes X_T:,\Phi \rangle -\rho_{\Phi}^T)^2=\mathbb{E}\langle :X_T\otimes \ldots \otimes X_T:,\Phi \rangle^2-\mathbb{E}(\rho_{\Phi}^T)^2.
\end{equation}
This follows from the fact that in $\rho_{\Phi}^T$ we have summation over distinct indices $\{j_1,\ldots,j_k\} \in \mathbb{N}$ and so the only nonzero terms in $\mathbb{E}(\langle :X_T\otimes \ldots \otimes X_T:,\Phi \rangle \rho_{\Phi}^T$ are those that correspond to $A=\emptyset$ in~\eqref{defwick}. Furthermore, if we recall the sum in~\eqref{x_t} defining $\langle X_T,\phi \rangle$ for $\phi \in \mathcal{S}(\mathbb{R})$, then it is obvious that $\mathbb{E}(\langle :X_T\otimes \ldots \otimes X_T:,\Phi \rangle \rho_{\Phi}^T=\mathbb{E}(\rho_{\Phi}^T)^2$. Let us denote $\mathbb{E}\langle :X_T\otimes \ldots \otimes X_T:,\Phi \rangle^2$ by $I$. Then


\begin{eqnarray}\label{mainsquare}
I&=& \sum_{j=1}^m \sum_{j'=1}^m \sum_{A,A'\in \mathcal{M}}(-1)^{|A|}(-1)^{|A'|} \prod_{(s,t)\in A}\mathbb{E}\langle X_T,\phi^{(s,j)} \rangle \langle X_T,\phi^{(t,j)} \rangle \nonumber \\
&& \times \: \prod_{(s',t')\in A'}\mathbb{E}\langle X_T,\phi^{(s',j')} \rangle \langle X_T,\phi^{(t',j')} \rangle \nonumber \\
&& \times \: \mathbb{E}\left(\prod_{n \notin \cup A} \langle X_T,\phi^{(n,j)} \rangle \prod_{n' \notin \cup A'} \langle X_T,\phi^{(n',j')} \rangle\right).
\end{eqnarray}
Computing the last expected value in~\eqref{mainsquare} amounts to summation over different choices of the \emph{diagonals} just as in the proof of Lemma~\ref{khermiteperm}. To illustrate it consider first the case $A=\emptyset=A'$. Then, we have no covariances and are left with
\begin{eqnarray}\label{diagonals}
I_{\emptyset}&=&\mathbb{E}\Big(\sum_{j=1}^m \sum_{j'=1}^m \langle X_T, \phi^{(1,j)} \rangle \ldots \langle X_T, \phi^{(k,j)} \rangle \nonumber \\
&& \times\: \langle X_T, \phi^{(1,j)} \rangle \ldots \langle X_T, \phi^{(k,j)} \rangle \Big) \\
&=& \mathbb{E}\Big(\sum_{\substack{j_1,\ldots,j_k \\ j_{k+1},\ldots,j_{2k}}} \sigma_{j_1}\ldots \sigma_{j_k} \sigma_{j_{k+1}}\ldots \sigma_{j_{2k}}F_T(x^{j_1}+\xi^{j_1},\ldots,x^{j_k}+\xi^{j_k})\nonumber\\
&& \times\:F_T(x^{j_{k+1}}+\xi^{j_{k+1}},\ldots,x^{j_{2k}}+\xi^{j_{2k}}),\Big),\nonumber
\end{eqnarray}
where
\begin{multline}\label{ft}
F_T(x_1+\xi^1,\ldots,x_k+\xi^k):=\\
\frac{1}{T^{k/2}}\int_{[0,T]^{k}}\Phi(x_1+\xi^1_{s_1},\ldots,x_k+\xi^k_{s_k})ds_1 \ldots ds_k,\quad x_1,\ldots,x_k \in \mathbb{R}.
\end{multline}
The only terms in the sum in~\eqref{diagonals}, whose expected values are non-zero, are those for which for every $l \in \{j_1,\ldots,j_k,j_{k+1},\ldots,j_{2k}\}$ there is an \emph{even} number of indices taking that value. This sum can be split into a finite number of sums over different \emph{diagonals}. To be precise, by a \emph{diagonal} $\mathcal{C}$ we mean a partition of $\{1,2,\ldots,2k\}$ into a disjoint family of subsets $C_1,\ldots,C_m$ of $\{1,2,\ldots,2k\}$ such that $|C_l|$ is even for $l=1,\ldots,m$. Then the term in~\eqref{diagonals} corresponding to this diagonal is given by
\begin{multline}\label{monster}
\sum_{\substack{j_{v_1^1}=\ldots=j_{v_{k_1}^1},\quad v_1^1,\ldots,v_{k_1}^1 \in C_1 \\ \ldots \\ j_{v_1^m}=\ldots=j_{v_{k_m}^m},\quad v_1^m,\ldots,v_{k_m}^m \in C_m }}\sigma_{j_1}\ldots \sigma_{j_k} \sigma_{j_{k+1}}\ldots \sigma_{j_{2k}} \\
\times F_T(x^{j_1}+\xi^{j_1},\ldots,x^{j_k}+\xi^{j_k})F_T(x^{j_{k+1}}+\xi^{j_{k+1}},\ldots,x^{j_{2k}}+\xi^{j_{2k}}).
\end{multline}
Now, a diagonal $\mathcal{C}$ is \emph{large} if any of the sets $C_1,\ldots,C_m$ has more than two elements. In due course we will see that the sums over these diagonals behave as $\frac{1}{T}$ as $T\rightarrow \infty$. All other diagonals are \emph{pairings} of different charges. This means that for these diagonals all $C_l$'s have exactly two elements. We will say that a pairing is \emph{normal} if for every $C \in \mathcal{C}$, $C$ has exactly one element from $\{1,\ldots,k\}$ and one element from $\{k+1,\ldots,2k\}$. All other non-large diagonals will be called \emph{non-normal pairings}. Notice that the choice of $A,A'$ in~\eqref{mainsquare} corresponds to fixing some particular part of the diagonal over which summation is being done. Looking at~\eqref{mainsquare} from this perspective, there will only be normal pairings in the sums corresponding to $A,A'=\emptyset$. We can use the same notation for sums that will emerge from the term $\mathbb{E}\left(\prod_{n \notin \cup A} \langle X_T,\phi^{(n,j)} \rangle \prod_{n' \notin \cup A'} \langle X_T,\phi^{(n',j')} \rangle\right)$ in~\eqref{mainsquare}. Putting all this together we can write
\begin{equation}\label{sort}
\mathbb{E}\langle :X_T\otimes \ldots \otimes X_T:,\Phi \rangle^2=I_1+I_2+R,
\end{equation}
where in $I_1$ we have only the sums over the diagonals which correspond to normal pairings. We see immediately that $I_1=\mathbb{E}\eta_{T,\Phi}^2$). $I_2$ corresponds to the sums over non-large non-normal pairings (notice that all the sums in~\eqref{mainsquare} with $A \neq \emptyset$ or $A' \neq \emptyset$ will be in $I_2$) and $R$ contains only sums over large diagonals. Notice that the terms in~\eqref{sort} may be written with the help of the function $F_T$ given by~\eqref{ft} and extend continuously to general $\Phi \in \mathcal{S}(\mathbb{R}^2)$, not necessarily of the form~\eqref{appform}.

We are going to show that $I_2=0$. Fix a non-large non-normal pairing $\mathcal{C}$. Assume that in the fixed diagonal over which the summation is being performed there are $n$ non-normal pairs $B$ formed between members of the sequence $(j_1,\ldots,j_k)$ and $n$ non-normal pairs $B'$ formed between members of the sequence $(j_{k+1},\ldots,i_{2k})$. All the other pairs (there are $k-2n$ of them) are normal. The sum over our fixed diagonal is going to appear in terms from~\eqref{mainsquare}  with $|A|,|A'|\leq n$. Fix $c,d \leq n$ and consider the summands in $I_2$ for which $|A|=c,|A'|=d$. The sum over $\mathcal{C}$ is going to appear in exactly $\binom{n}{c}\binom{n}{d}$ summands with $c+d=n$ with the sign equal to $(-1)^{c+d}$. This can be justified as follows. The sum over $\mathcal{C}$ can only appear in the terms with $A\subset B$ and $A'\subset B'$ and there will be $\binom{n}{c}$ choices of $A$ with $A\subset B$ and $\binom{n}{d}$ choices of $A'$ with $A'\subset B'$. We see that for each $0 \leq m \leq 2n$ with $m=|A|+|A'|$ the sum corresponding to our fixed diagonal will appear exactly $\sum_{l=0}^m \binom{n}{m-l}\binom{n}{l}=\binom{2n}{m}$ with sign equal to $(-1)^m$. Hence, the number of times (with signs taken into account) the sum over our fixed diagonal will appear in $I_2$ is exactly $\sum_{m=0}^{2n}(-1)^m\binom{2n}{m} =0$. This proves that $I_2=0$.

$R$ can be split into a finite number of sums over large diagonals, all of which have the property that the summation is taken over indices $(j_1,$$\ldots,$$j_k,$$j_{k+1},$$\ldots,$$j_{2k})$ such that at least four of them are equal. To finish the proof of~\eqref{hardpart} we fix $\epsilon \in (0,1)$ and take $\Phi=\Phi_{\epsilon,\psi_\kappa}^f$ (see~\eqref{kiltdefi}). It remains to show that $R=R_{T,\Phi_{\epsilon,\psi_\kappa}^f}$ converges to $0$ as $T \rightarrow \infty$ uniformly in $\kappa \in (0,1)$. Put $\theta(x):=\frac{1}{1+|x|^2},x \in \mathbb{R}$. Notice that
\begin{equation} 
|\Phi(x_1,\ldots,x_k)|\leq p(\Phi) \frac{1}{1+|x_1|^2}\ldots \frac{1}{1+|x_k|^2}=p(\Phi)\theta(x_1)\ldots \theta(x_k),
\end{equation}
where $p(\Phi)$ is a continuous seminorm on $\mathcal{S}(\mathbb{R}^k)$, given by
\begin{equation}
p(\Phi)=\sup_{x_1\in \mathbb{R},\ldots,x_k \in \mathbb{R}}\left|(1+|x_1|^2)\ldots(1+|x_k|^2)\Phi(x_1,\ldots,x_k)\right|.
\end{equation}
Thanks to this
\begin{equation}\label{sup}
\sup_{\kappa \in (0,1)}p(\Phi_{\epsilon,\psi_\kappa}^f)\leq C(\epsilon,f),
\end{equation}
with $C(\epsilon,f)$ being a constant depending only on $f$ and $\epsilon$ and independent of $\kappa$.
To fix our attention, let us consider an example of a large diagonal with $k=3$. This diagonal is given by requiring that $j_1=j_2=j_5=j_6$ and $j_3=j_4$. Then the expected value of the sum corresponding to this diagonal is given by
\begin{equation}
\frac{1}{T^{\frac{3}{2}}}\int_{\mathbb{R}^2}\mathbb{E}\left(F_T(x_1+\xi^1,x_1+\xi^1,x_2+\xi^2)F_T(x_2+\xi^2,x_1+\xi^1,x_1+\xi^1)\right)dx_1dx_2.
\end{equation}
The absolute value of the above integral is no bigger than
\begin{multline}\label{split}
\frac{1}{T^{\frac{3}{2}}}p(\Phi)^2\int_{\mathbb{R}^2}\int_{[0,T]^6}\mathbb{E}\Big(\theta(x_1+\xi^1_{u_1})\theta(x_1+\xi^1_{u_2})\theta(x_2+\xi^2_{u_3}) \\
\times\theta(x_2+\xi^2_{u_4}) \theta(x_1+\xi^1_{u_5})\theta(x_1+\xi^1_{u_6})\Big)du_1\ldots du_6 dx_1dx_2.
\end{multline}
By~\eqref{sup}, for $\Phi=\Phi_{\epsilon,\psi_\kappa}^f$, the integral in~\eqref{split} can be bounded uniformly in $\kappa$ by an integral which (by independence) can be written as a product of two integrals times a constant $C(\epsilon,f)$. One of the factors of this product (the one correspoding to the pairing $j_3=j_4$) is bounded by a constant. The other is given by
\begin{multline*}
\frac{1}{T^2}\int_{[0,T]^4}\int_\mathbb{R}\mathbb{E}\left(\theta(x+\xi^1_s) \theta(x+\xi^1_u)\theta(x+\xi^1_r)\theta(x+\xi^1_v)\right)dxdsdudrdv.
\end{multline*}
Following the proof of Theorem 3.5 in~\cite{FILT} we see that the above is bounded by
\begin{equation}
\frac{1}{T^2}\int_0^T \int_\mathbb{R}\theta(x)U(\theta U(\theta U\theta))(x)dxds\leq \frac{1}{T}C_2,
\end{equation}
where $C_2$ is another constant, and $U$ is the potential of an $\alpha$-stable semigroup. The second inequality above follows from the fact that $U\psi$ is bounded. In the case of blocks larger than four the argument is very similar. To conclude, $R \leq \frac{C}{T}p(\Phi)^2$, where $C$ is a constant, which depends only on $k,\epsilon$ and $f$ (the bound $\frac{1}{T}$ was given in~\cite{FILT} only for the diagonal with the largest element consisting of four equal indexes, but having larger diagonals is even better which can easily be inferred from the proof of equation (6.27) in~\cite{FILT}). This means that we can write $\mathbb{E}\left(\langle :X_T\otimes \ldots \otimes X_T:,\Phi \rangle - \rho_{T,\Phi}\right)^2 \leq \frac{C}{T}p(\Phi)^2$ for $\Phi \in \mathcal{S}(\mathbb{R}^k)$.

To prove~\eqref{pa4} we will need the following generalization of lemma 6.3 in~\cite{FILT}
\begin{lem}
Let $(X_T)_{T \geq 1}$ be a family of $\mathcal{S'}$-valued random variables such that 
\begin{equation*}
\sup_{T \geq 1} \mathbb{E}\langle X_T,\phi \rangle^2 \leq p^2(\phi),\quad \phi \in \mathcal{S}(\mathbb{R}),
\end{equation*}
for some continuous Hilbertian seminorm $p$ on $\mathcal{S}(\mathbb{R})$. Suppose that $X_T \Rightarrow X$ and $\mathbb{E}\langle X_T,\phi \rangle^2 \rightarrow \mathbb{E}\langle X,\phi \rangle^2$ for $\phi \in \mathcal{S}(\mathbb{R})$ as $T \rightarrow \infty$. Then $X_T \otimes \ldots \otimes X_T$ and $X \otimes \ldots \otimes X$ are well defined and $:X_T \otimes \ldots \otimes X_T: \Rightarrow :X \otimes \ldots \otimes X:$ as $T \rightarrow \infty$.
\end{lem}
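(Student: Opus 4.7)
The plan is to extend Lemma 6.3 of~\cite{FILT} from $k=2$ to arbitrary $k$, following the same overall scheme but requiring more careful bookkeeping of the combinatorial expansion in~\eqref{defwick}. First I would verify that the tensor product $X_T^{\otimes k}$ is a well-defined $\mathcal{S}'(\mathbb{R}^k)$-valued random variable with a controlled second moment. On elementary tensors $\Phi = \phi_1 \otimes \ldots \otimes \phi_k$ one sets $\langle X_T^{\otimes k}, \Phi \rangle = \prod_{i=1}^k \langle X_T, \phi_i\rangle$; the Cauchy--Schwarz inequality together with the hypothesis $\sup_T \mathbb{E}\langle X_T,\phi\rangle^2 \leq p(\phi)^2$ gives $\sup_T \mathbb{E}\langle X_T^{\otimes k}, \Phi\rangle^2 \leq \prod_i p(\phi_i)^2$. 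By the kernel theorem and density of elementary tensors in $\mathcal{S}(\mathbb{R}^k)$, this extends to a continuous linear map on $\mathcal{S}(\mathbb{R}^k)$ with a uniform bound $\sup_T \mathbb{E}\langle X_T^{\otimes k},\Phi\rangle^2 \leq p_k(\Phi)^2$ for a continuous Hilbertian seminorm $p_k$ on $\mathcal{S}(\mathbb{R}^k)$. Expanding the square of the right-hand side of~\eqref{defwick} and invoking the same bound on the covariance factors, one obtains the analogous estimate $\sup_T \mathbb{E}\langle :X_T^{\otimes k}:,\Phi\rangle^2 \leq C_k\, p_k(\Phi)^2$, and the same for $X$.

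Next I would prove convergence of one-dimensional distributions on elementary tensors. Fix $\Phi = \phi_1\otimes\ldots\otimes \phi_k$. From $X_T \Rightarrow X$ in $\mathcal{S}'(\mathbb{R})$ we get joint convergence $(\langle X_T,\phi_1\rangle,\ldots,\langle X_T,\phi_k\rangle)\Rightarrow (\langle X,\phi_1\rangle,\ldots,\langle X,\phi_k\rangle)$ in $\mathbb{R}^k$, so by the continuous mapping theorem every sub-product $\prod_{n\notin \cup A}\langle X_T,\phi_n\rangle$ converges in distribution to the corresponding product for $X$. Polarization of the assumption $\mathbb{E}\langle X_T,\phi\rangle^2 \to \mathbb{E}\langle X,\phi\rangle^2$ gives convergence of each covariance factor $\mathbb{E}(\langle X_T,\phi_s\rangle\langle X_T,\phi_t\rangle)$ to its analogue; these are deterministic constants, so Slutsky's theorem combined with the finite sum in~\eqref{defwick} yields $\langle :X_T^{\otimes k}:,\Phi\rangle \Rightarrow \langle :X^{\otimes k}:,\Phi\rangle$. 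Joint convergence for a finite family of elementary tensors follows in exactly the same way.

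I would then pass from elementary tensors to general $\Phi\in \mathcal{S}(\mathbb{R}^k)$ by an approximation argument. Given $\Phi$, pick a sequence $\Phi_n$ of finite sums of elementary tensors with $p_k(\Phi-\Phi_n)\to 0$ (density of the algebraic tensor product in the Schwartz topology). The uniform second-moment bound of Step 1 gives
\begin{equation*}
\sup_{T\geq 1}\mathbb{E}\,\bigl|\langle :X_T^{\otimes k}:,\Phi-\Phi_n\rangle\bigr|^2 \leq C_k\, p_k(\Phi-\Phi_n)^2 \to 0,
\end{equation*}
and similarly for the limit $X$. A standard triangle-inequality argument in characteristic functions then transfers the distributional convergence from $\Phi_n$ to $\Phi$, jointly for any finite family. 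Finally, the uniform seminorm bound $\mathbb{E}\langle :X_T^{\otimes k}:,\Phi\rangle^2 \leq C_k p_k(\Phi)^2$ supplies tightness of $(:X_T^{\otimes k}:)_T$ in $\mathcal{S}'(\mathbb{R}^k)$ via Mitoma's theorem, and combined with the finite-dimensional convergence just established one concludes $:X_T^{\otimes k}: \Rightarrow :X^{\otimes k}:$ in $\mathcal{S}'(\mathbb{R}^k)$.

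The main technical obstacle is the bookkeeping in Step 1: one must verify that the squared $L^2$-norm of the Wick product, when expanded using~\eqref{defwick}, can be uniformly bounded by $p_k(\Phi)^2$ on elementary tensors, and that this bound propagates to general $\Phi$ under the kernel-theorem extension. For $k=2$ this is where~\cite{FILT} does the work; for larger $k$ the number of pairings grows, but each individual term in the expansion is still bounded by a product of factors of the form $p(\phi_s) p(\phi_t)$, and a Cauchy--Schwarz argument on the remaining factors gives the required estimate. Everything else is an essentially mechanical extension of the $k=2$ argument.
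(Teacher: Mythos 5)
The paper itself states this lemma without proof, presenting it as a direct generalization of Lemma 6.3 in \cite{FILT}, so the comparison is against the argument that reference supplies for $k=2$. Your Step 2 (convergence on elementary tensors via joint convergence of $(\langle X_T,\phi_1\rangle,\ldots,\langle X_T,\phi_k\rangle)$, the continuous mapping theorem, polarization of the second-moment convergence, and Slutsky) is sound. The genuine gap is in Step 1. For an elementary tensor $\Phi=\phi_1\otimes\ldots\otimes\phi_k$ you have
\begin{equation*}
\mathbb{E}\langle X_T^{\otimes k},\Phi\rangle^2=\mathbb{E}\prod_{i=1}^k\langle X_T,\phi_i\rangle^2,
\end{equation*}
which is a moment of order $2k$ of the field; Cauchy--Schwarz (or generalized H\"older) bounds it by $\prod_i\bigl(\mathbb{E}\langle X_T,\phi_i\rangle^{2k}\bigr)^{1/k}$, not by $\prod_i p(\phi_i)^2$. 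The hypothesis controls only second moments, and nothing in the statement guarantees that $\langle X_T,\phi\rangle$ has finite moments of order $2k$ at all (the $X_T$ are not assumed Gaussian; in the paper's application they are Poisson-type functionals). Consequently the uniform bound $\sup_T\mathbb{E}\langle :X_T^{\otimes k}:,\Phi\rangle^2\leq C_k\,p_k(\Phi)^2$ that drives your Step 3 (the $L^2$ approximation by sums of elementary tensors) and Step 4 (tightness via Mitoma) is not available from the stated hypotheses, and the whole $L^2$-based extension collapses.

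The route the hypotheses are designed for avoids moment bounds on the random part entirely. The tensor product $X_T\otimes\ldots\otimes X_T$ is defined pathwise, since the tensor product of tempered distributions is again a tempered distribution (Schwartz kernel theorem); the Wick product differs from it by contractions against the deterministic covariance bilinear form $(\phi,\psi)\mapsto\mathbb{E}\langle X_T,\phi\rangle\langle X_T,\psi\rangle$, and this is exactly what the Hilbertian bound $p^2$ controls: it yields a tempered distribution on $\mathbb{R}^2$, uniformly in $T$. For the convergence one passes, via the Skorokhod representation, to a version with $X_T\to X$ almost surely in $\mathcal{S}'(\mathbb{R})$, uses the sequential continuity of $S\mapsto S\otimes\ldots\otimes S$ into $\mathcal{S}'(\mathbb{R}^k)$, and handles the subtracted covariance terms through polarization of $\mathbb{E}\langle X_T,\phi\rangle^2\to\mathbb{E}\langle X,\phi\rangle^2$ combined with the equicontinuity furnished by the uniform seminorm bound. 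If you prefer to keep your $L^2$ scheme, you must either add a $2k$-th moment hypothesis of the form $\sup_T\mathbb{E}\langle X_T,\phi\rangle^{2k}\leq q(\phi)^{2k}$ (harmless in the application, where such bounds do hold for the particle functionals, but absent from the lemma as stated) or switch to the pathwise, almost-sure argument.
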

This together with Theorem~\ref{FBR} implies~\eqref{pa4}. We proceed to prove~\eqref{pa5} and~\eqref{pa6}. Fix $k \in \mathbb{N},k \geq 2$. Let $\alpha \in (1-\frac{1}{k},1)$ and let $(X_\phi)_{\phi \in \mathcal{S}(\mathbb{R})}$ be a generalized centered Gaussian random field over the Schwartz space with spectral measure $G(dx)=|x|^{-\alpha}dx$. Notice that, as before, using Theorem 4.7 in~\cite{MAJOR}, we might write
\begin{equation}
\langle :X\otimes \ldots \otimes X:,\Phi \rangle = \int_{\mathbb{R}^k}^{''}\widehat{\Phi}(x_1,\ldots,x_k)Z_G(dx_1)\ldots Z_G(dx_k),
\end{equation}
for any $\Phi \in \mathcal{S}(\mathbb{R}^k)$. Whenever $\int_{\mathbb{R}^k}^{''}|\widehat{\phi}(x_1+\ldots +x_k)|^2 G(dx_1)\ldots G(dx_k) < \infty$, we see that by dominated convergence theorem, 
\begin{equation}\label{spectralconv}
\langle :X\otimes \ldots \otimes X:,\Phi_{\epsilon,\phi}^f \rangle \overset{L^2(\Omega)}{\rightarrow} \int_{\mathbb{R}^k}^{''}\widehat{\phi}(x_1+\ldots +x_k)Z_G(dx_1)\ldots Z_G(dx_k).
\end{equation}
Given the above~\eqref{pa5} and~\eqref{pa6} follow immediately. Establishing~\eqref{pa1} - ~\eqref{pa6} shows that the finite-dimensional distributions of $(\eta_t^T)_{t \geq 0}$ converge to the finite dimensional distributions of the $k$-Hermite process given by~\eqref{spec_rep}
\end{proof}
\begin{rem}
In fact we have shown that for $\Phi \in \mathcal{S}(\mathbb{R}^k)$ the functional defined by
\begin{equation}
\rho_{T,\Phi}:=\frac{1}{T^{k/2}}\sum_{j_1 \neq \ldots \neq j_k}\sigma_{j_1}\ldots \sigma_{j_k}\int_{[0,T]^k}\Phi(x^{j_1}+\xi^{j_1}_{s_1},\ldots,x^{j_k}+\xi^{j_k}_{s_k})ds_1 \ldots ds_k,
\end{equation}
converges (up to a constant) in distribution to
\begin{equation}
\int_{\mathbb{R}^k}^{''}\widehat{\Phi}(z_1,\ldots,z_k)Z_G(dz_1)\ldots Z_G(dz_k),
\end{equation}
as $T\rightarrow \infty$. 
\end{rem}

\newpage

\appendix
\section{Appendix}
We would like to have the generalizations of some of the facts used in~\cite{PP} and~\cite{FILT}. First we state the generalized version of Lemma 8.1 from~\cite{PP}, which is the simplified version of the so called Mecke-Palm formula (see for instance equation (2.10) in~\cite{PL}). 
\begin{lem}\label{fractional}
Let $(x^j)$ be a Poisson system with intensity $\mu$ on $\mathbb{R}^d, d \geq 1$. If $F$ is in $L^1(\mathbb{R}^{kd},\mu^{\otimes k})$, then $\mathbb{E}\left(\sum_{j_1 \neq j_2 \neq \ldots \neq j_k}|F(x^{j_1},\ldots,x^{j_k})|\right)<\infty$ and
\begin{equation*}
\mathbb{E}\left(\sum_{j_1 \neq j_2 \neq \ldots \neq j_k}G(x^{j_1},\ldots,x^{j_k})\right)=\int_{\mathbb{R}^{kd}}F(x_1,\ldots,x_k)\mu(dx_1)\ldots \mu(dx_k).
\end{equation*}
\end{lem}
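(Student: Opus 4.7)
The approach is to establish the identity for indicator functions of product sets via the Poisson factorial moment identity, and then extend to $L^1$ by standard measure-theoretic arguments. (Here I assume the ``$G$'' appearing in the displayed equation is a typo for ``$F$''.)

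The starting observation is that it suffices to treat $F \geq 0$: once the identity is known for non-negative measurable functions, applying it to $|F|$ yields the integrability assertion, and splitting $F = F^+ - F^-$ together with linearity (justified by the just-established integrability of $|F|$) recovers the signed case. The strategy for $F \geq 0$ is the usual functional monotone class scheme. First I would prove the identity for $F = \mathbf{1}_{A_1 \times \cdots \times A_k}$ with $A_1, \ldots, A_k$ bounded Borel subsets of $\mathbb{R}^d$. Linearity then gives it for simple functions supported on finite unions of bounded rectangles, and monotone convergence (applicable because the summand on the left-hand side is non-negative) extends it to all non-negative Borel $F$.

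For the indicator case, set $A := \bigcup_{i=1}^k A_i$, which is bounded and hence satisfies $\mu(A) < \infty$, and condition on the Poisson count $N := N(A)$. The defining property of the Poisson point process is that, given $N = n$, the $n$ points of the process lying in $A$ are i.i.d.\ with common law $\mu|_A / \mu(A)$. A direct computation then gives
\begin{equation*}
\mathbb{E}\left(\sum_{j_1 \neq \cdots \neq j_k} \prod_{i=1}^k \mathbf{1}_{A_i}(x^{j_i}) \,\Big|\, N = n\right) = \frac{n(n-1)\cdots(n-k+1)}{\mu(A)^k} \prod_{i=1}^k \mu(A_i),
\end{equation*}
because each ordered $k$-tuple of distinct indices $(j_1,\ldots,j_k)$ contributes the same probability $\prod_i \mu(A_i)/\mu(A)^k$, and there are $n(n-1)\cdots(n-k+1)$ such tuples. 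Taking expectation and invoking the Poisson factorial moment identity $\mathbb{E}\bigl(N(N-1)\cdots(N-k+1)\bigr) = \mu(A)^k$ collapses the right-hand side to $\prod_i \mu(A_i) = \int \mathbf{1}_{A_1\times\cdots\times A_k}\, d\mu^{\otimes k}$, which is the desired identity on product indicators.

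No step constitutes a genuine obstacle; the only non-trivial ingredient is the factorial moment identity for Poisson variables, and this is precisely what makes the right-hand side the plain product measure $\mu^{\otimes k}$ without any combinatorial correction factor. An alternative route would be to iterate the Mecke (Slivnyak) formula $k$ times, but the conditioning argument sketched above is self-contained and arguably more elementary.
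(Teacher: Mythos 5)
Your proof is correct, but it takes a different route from the paper for the simple reason that the paper gives no proof at all: Lemma~\ref{fractional} is stated in the Appendix as ``the simplified version of the so called Mecke--Palm formula'' and is justified purely by citation to equation (2.10) of Last and Penrose (reference~\cite{PL} in the paper), i.e.\ the author invokes the known fact that the $k$-th factorial moment measure of a Poisson process with intensity $\mu$ is $\mu^{\otimes k}$. Your argument --- reduce to $F\ge 0$, verify the identity on indicators of products of sets of finite measure by conditioning on $N(A)$ and using the Poisson factorial moment identity $\mathbb{E}\bigl(N(N-1)\cdots(N-k+1)\bigr)=\mu(A)^{k}$, then run the functional monotone class machine --- is a complete, self-contained and elementary proof of the same statement, and you correctly flag both the typo ($G$ should be $F$) and the alternative of iterating the Mecke equation, which is essentially what the cited reference does. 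What the citation buys the paper is brevity; what your argument buys is independence from the literature at the cost of two standard but suppressed technical points that you should at least acknowledge: (i) the restriction of the Poisson system to $A$ is itself Poisson and, conditionally on $N(A)=n$, its points are i.i.d.\ with law $\mu|_A/\mu(A)$ (the binomial representation), and only points in $A$ contribute to the sum; and (ii) if $\mu$ is merely $\sigma$-finite rather than locally finite, ``bounded Borel sets'' should be replaced by ``Borel sets of finite $\mu$-measure,'' which still form a generating $\pi$-system. Neither is a gap --- in the paper $\mu$ is Lebesgue measure, so both are immediate.
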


Assume that $\xi^1,\ldots,\xi^k$ are independent symmetric $\alpha$-stbale L\'{e}vy processes with $\alpha \in (1-\frac{1}{k},1)$ and $k\geq 2$ is an integer. Moreover, let $(\sigma_j)_{j \in \mathbb{N}}$ be a sequence of i.i.d. random variables independent of $\xi^1,\ldots,\xi^k$ and such that $\mathbb{P}(\sigma_1=1)=\mathbb{P}(\sigma_1=-1)$. We then have the following.

\begin{lem}\label{khermiteperm}
For any $F(\cdot + \xi^1,\ldots,\cdot + \xi^k) \in L^2(\mathbb{R}^k \times \Omega,\lambda_k\otimes \mathbb{P})$ the series 
\begin{equation*}
\sum_{j_1 \neq  \ldots \neq j_k}\sigma_{j_1}\ldots \sigma_{j_k}F(x^{j_1}+\xi^{j_1},\ldots,x^{j_k}+\xi^{j_k})
\end{equation*}
converges in $L^2(\Omega)$, and
\begin{multline*}
\mathbb{E}\left(\sum_{j_1 \neq  \ldots \neq j_k}\sigma_{j_1}\ldots \sigma_{j_k}F(x^{j_1}+\xi^{j_1},\ldots,x^{j_k}+\xi^{j_k})\right)^2= \\
=\int_{\mathbb{R}^k}\mathbb{E}\left(\sum_{\pi \in \Pi(k)}F(x_1+\xi^1,\ldots,x_k+\xi^k)\times F(x_{\pi_1}+\xi^{\pi_1},\ldots,x_{\pi_k}+\xi^{\pi_k})\right)\\
dx_1\ldots dx_k,
\end{multline*}
where the summation in the second integral is over all permutations $\pi$ of \\
$\lbrace 1,2,\ldots,k \rbrace$.
\end{lem}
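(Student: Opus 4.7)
The plan is to exploit the independence of the three sources of randomness --- the Poisson points $(x^j)$, the Rademacher signs $(\sigma_j)$, and the L\'evy processes $(\xi^j)$ --- by handling them in separate stages. First I would introduce the truncated sums
\begin{equation*}
S_N := \sum_{j_1 \neq \ldots \neq j_k} \sigma_{j_1} \cdots \sigma_{j_k} F(x^{j_1}+\xi^{j_1},\ldots,x^{j_k}+\xi^{j_k}) \mathbf{1}_{\{x^{j_1},\ldots,x^{j_k} \in [-N,N]\}},
\end{equation*}
which are finite sums almost surely since the Poisson process has only finitely many points in $[-N,N]$. The aim is then to show that $(S_N)$ is Cauchy in $L^2(\Omega)$ and to compute the limit of $\mathbb{E}[S_N^2]$.

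Squaring $S_N$ and expanding yields a double sum over distinct $k$-tuples $(j_1,\ldots,j_k)$ and $(l_1,\ldots,l_k)$, each summand carrying the factor $\sigma_{j_1}\cdots\sigma_{j_k}\sigma_{l_1}\cdots\sigma_{l_k}$. Taking the expectation over the $\sigma$'s first, the fact that they are independent Rademachers independent of everything else combined with pairwise distinctness within each tuple forces the contribution to vanish unless $\{j_1,\ldots,j_k\}=\{l_1,\ldots,l_k\}$ as sets, in which case it equals $1$. The surviving pairs are parameterized by a distinct-entry tuple $(j_1,\ldots,j_k)$ together with a permutation $\pi\in\Pi(k)$ such that $l_i=j_{\pi_i}$, giving
\begin{equation*}
\mathbb{E}[S_N^2] = \sum_{\pi \in \Pi(k)} \mathbb{E}\Bigl(\sum_{j_1 \neq \ldots \neq j_k} F_{\mathbf{j}} \, F_{\pi(\mathbf{j})} \, \mathbf{1}_{\{x^{j_1},\ldots,x^{j_k} \in [-N,N]\}}\Bigr),
\end{equation*}
where $F_{\mathbf{j}}$ is shorthand for $F(x^{j_1}+\xi^{j_1},\ldots,x^{j_k}+\xi^{j_k})$ and $F_{\pi(\mathbf{j})}$ for the analogous expression with each $j_i$ replaced by $j_{\pi_i}$.

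Next I would apply the Mecke--Palm formula of Lemma~\ref{fractional} to the marked Poisson process $\{(x^j,\xi^j)\}$, whose intensity factorises as $\lambda_1 \otimes \mu_\xi$, with $\mu_\xi$ the law of a single symmetric $\alpha$-stable L\'evy process. Because the marks are i.i.d.\ and independent of the points, after integration the marks attached to the $k$ integration variables may be relabelled as $\xi^1,\ldots,\xi^k$, producing
\begin{equation*}
\mathbb{E}[S_N^2] = \sum_{\pi} \int_{[-N,N]^k} \mathbb{E}\bigl[F(x_1+\xi^1,\ldots,x_k+\xi^k) \, F(x_{\pi_1}+\xi^{\pi_1},\ldots,x_{\pi_k}+\xi^{\pi_k})\bigr] \,dx_1 \cdots dx_k.
\end{equation*}
A pointwise Cauchy--Schwarz in $\mathbb{P}$ followed by Cauchy--Schwarz in $\lambda_k$ (using the permutation-invariance of Lebesgue measure) bounds each summand by $\norm{F}_{L^2(\mathbb{R}^k \times \Omega)}^2$, so the right-hand side is at most $k!\,\norm{F}^2$ uniformly in $N$; dominated convergence then gives the identity claimed in the lemma as $N\to\infty$.

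The $L^2(\Omega)$-convergence of $S_N$ itself follows from running the same calculation on $\mathbb{E}[(S_N-S_M)^2]$ and noting that the resulting integral is supported on $[-N,N]^k\setminus[-M,M]^k$, which tends to $0$ as $M\to\infty$ by dominated convergence. In my view the main obstacle is the combinatorial bookkeeping in the second step --- verifying carefully that the Rademacher expectation forces the two $k$-tuples to coincide \emph{as sets} and that each such coincidence is counted exactly once by a unique $\pi\in\Pi(k)$ --- together with confirming that Lemma~\ref{fractional} extends to the marked process, whose marks live in an infinite-dimensional path space (standard, but worth noting explicitly). Once these are addressed, the remaining Fubini and dominated-convergence manipulations are routine.
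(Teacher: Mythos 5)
Your proof is correct and takes essentially the same route the paper intends: the paper dispatches this lemma in a single sentence (``follows immediately from Lemma~\ref{fractional} and the fact that the $\sigma_j$'s are independent''), and your truncation, Rademacher-pairing reduction to permutations, and application of the Mecke--Palm formula to the marked process $(x^j,\xi^j)$ is precisely the fleshed-out version of that remark. The only points you flag --- the set-coincidence forced by the sign expectation and the extension of Lemma~\ref{fractional} to path-space marks --- are handled correctly.
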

This lemma follows immediately from Lemma~\ref{fractional} and the fact that the $\sigma_j$s are independent of $\xi^1,\ldots,\xi^k$.




\end{document}